\documentclass{elsarticle}

\usepackage{etoolbox} 

\makeatletter
\def\ps@pprintTitle{%
\let\@oddhead\@empty
\let\@evenhead\@empty
\def\@oddfoot{}%
\let\@evenfoot\@oddfoot}
\makeatother
\usepackage{xcolor}

\usepackage{amsmath}
\usepackage{cases}
\usepackage{color} 
\usepackage{amssymb}
\usepackage{amsthm}
\usepackage{MnSymbol}
\usepackage{pgf,tikz}
\usepackage{caption}

\usepackage{chngcntr}
\usepackage{lipsum}
\usepackage{color}
\usetikzlibrary{arrows}

\theoremstyle{plain}
\newtheorem{thm}{Theorem}[section]
\newtheorem{lem}{Lemma}[section]
\newtheorem{cor}{Corollary}[section]

\newtheorem{prop}{Proposition}[section]

\newtheorem{Fact}{Fact}[section]

\newtheorem{claim}{Claim}[section]
\newtheorem{Notation}{Notation}[section]

\newtheorem{Observation}{Observation}[section]
\numberwithin{equation}{section}

\begin{document}
\nocite{*}
\sloppy

\newtheorem*{Convention}{Convention}

\begin{frontmatter}

\title{Making a tournament indecomposable by one subtournament-reversal operation}
\author{Houmem Belkhechine\fnref{}}
\address{University of Carthage, Bizerte Preparatory Engineering Institute, Bizerte, Tunisia}
\ead{houmem.belkhechine@ipeib.rnu.tn}

\author{Cherifa Ben Salha\fnref{}}
\address{University of Carthage, Faculty of Sciences of Bizerte, Bizerte, Tunisia}
\ead{cherifa.bensalha@fsb.u-carthage.tn}

\begin{abstract}     
Given a tournament $T$, a module of $T$ is a subset $M$ of $V(T)$ such that for $x, y\in M$ and $v\in V(T)\setminus M$, $(v,x)\in A(T)$ if and only if $(v,y)\in A(T)$. The trivial modules of $T$ are $\emptyset$, $\{u\}$ $(u\in V(T))$ and $V(T)$. The tournament $T$ is indecomposable if all its modules are trivial; otherwise it is decomposable. Let $T$ be a tournament with at least five vertices. In a previous paper, the authors proved that the smallest number $\delta(T)$ of arcs that must be reversed to make $T$ indecomposable satisfies $\delta(T) \leq \left\lceil \frac{v(T)+1}{4} \right\rceil$, and this bound is sharp, where $v(T) = |V(T)|$ is the order of $T$.   
In this paper, we prove that if the tournament $T$ is not transitive of even order, then $T$ can be made indecomposable by reversing the arcs of a subtournament of $T$. We denote by $\delta'(T)$ the smallest size of such a subtournament. We also prove that $\delta(T) = \left\lceil \frac{\delta'(T)}{2} \right\rceil$.               
\end{abstract}
\begin{keyword}
Module \sep co-module \sep indecomposable \sep decomposability arc-index \sep decomposability subtournament-index \sep co-modular index. 
\MSC[2010] 05C20 \sep  05C35. 
\end{keyword}
\end{frontmatter}

\section{Introduction and main results}
A {\it tournament} $T = (V(T), A(T))$ consists of a finite set $V(T)$ of {\it vertices} together with a set $A(T)$ of ordered pairs of distinct vertices, called {\it arcs}, such that for every $x \neq y \in V(T)$, $(x,y) \in A(T)$ if and only if $(y,x) \not\in A(T)$. The {\it order} of $T$, denoted by $v(T)$, is the number of its vertices. Given a tournament $T$, the {\it subtournament} of $T$ induced by a subset $X$ of $V(T)$ is the tournament $T[X] = (X, A(T) \cap (X \times X))$. For $X\subseteq V(T)$, the subtournament $T[V(T) \setminus X]$ is also denoted by $T-X$, and by $T-x$ when $X= \{x\}$. Two tournaments $T$ and $T'$ are {\it isomorphic} if there exists an {\it isomorphism} from $T$ onto $T'$, i.e., a bijection $f$ from $V(T)$ onto $V(T')$ such that for every $x, y \in V(T)$, $(x, y) \in A(T)$ if and only if $(f(x), f(y)) \in A(T')$. 
A {\it transitive} tournament is a tournament $T$ such that for every $x,y,z \in V(T)$, if $(x,y) \in A(T)$ and  $(y,z) \in A(T)$, then $(x,z) \in A(T)$. Given a positive integer $n$, every transitive tournament of order $n$ is isomorphic to the transitive tournament $\underline{n} = (\{0, \ldots, n-1\}, \{(i,j) : 0 \leq i < j \leq n-1\})$.

The classic notion of a module is the main object of the paper. Given a tournament $T$, a subset $M$ of $V(T)$ is a {\it module} \cite{Spinrad} (or a {\it clan} \cite{E} or an {\it interval} \cite{I}) of $T$ provided that for every $x,y \in M$ and for every $v \in V(T) \setminus M$, $(v,x) \in A(T)$ if and only if $(v,y) \in A(T)$. For example, $\emptyset$, $\{x\}$, where $x \in V(T)$, and $V(T)$ are modules of $T$, called {\it trivial} modules. 
A tournament is {\it indecomposable} \cite{I, ST} (or {\it prime} \cite{Spinrad} or {\it primitive} \cite{E} or {\it simple} \cite{Erdos}) if all its modules are trivial; otherwise it is {\it decomposable}. An isomorphism preserves modules, in particular it preserves indecomposability. Let us consider the tournaments of small orders. The tournaments of orders at most $2$ are clearly indecomposable. Up to isomorphism, the tournaments of order $3$ are the decomposable tournament $\underline{3}$, and the indecomposable tournament $(\{0,1,2\}, \{(0,1), (1,2), (2,0)\})$. Up to isomorphism, there are four tournaments of order $4$, each of them is decomposable. Similarly, the transitive tournaments of orders at least 3 are all decomposable. More precisely, for every integer $n \geq 3$,
 the modules of  $\underline{n}$ are the intervals of the usual total order on $V(\underline{n})$. On the other hand, it is well-known that for every integer $n \geq 5$, there exist indecomposable tournaments of order $n$ (see e.g., Theorem~\ref{thmerdos} below).

 Our topic is based on the following question $(Q)$: what is the minimum number of some allowed operations that must be successively applied to a tournament in order to make it indecomposable ? J.W. Moon \cite{Moon} and P. Erd\H{o}s et al. \cite{Erdos} studied this question when an allowed operation, that we call {\it vertex-addition} operation, consists of adding a single vertex. They proved that given a tournament $T$ of order at least $4$, only one vertex-addition operation suffices to make $T$ indecomposable, unless $T$ is transitive of odd order. Formally: 

\begin{thm} [\cite{Erdos, Moon}] \label{thmerdos}
 Given a tournament $T$ of order at least $4$ that is not transitive of odd order, there exists an indecomposable tournament $T'$ such that $T'-v = T$ for some $v \in V(T')$. 
\end{thm}

Theorem~\ref{thmerdos} is at the origin of similar studies in graphs, digraphs,  and other combinatorial structures: the question is to determine the minimum number of vertices that must be added to a given structure in order to make it indecomposable (see e.g., \cite{BI, BIW, Brignall, BRV}).  

In this paper, we are interested in Question $(Q)$ when an allowed operation consists of reversing arcs according to given rules. We will see how Theorem~\ref{thmerdos} can be stated in this context (see Theorem \ref{thmerdos2} below). We need some notations and terminology. 
Let $T$ be a tournament. An {\it arc-reversal} operation consists of reversing a single arc $a = (x,y) \in A(T)$, i.e., replacing the arc $a$ by $a^{\star} = (y,x)$ in $A(T)$. The tournament obtained from $T$ after reversing the arc $a$ is denoted by ${\rm Inv}(T,a)$ or ${\rm Inv}(T,\{x,y\})$. Thus ${\rm Inv}(T,a) = {\rm Inv}(T,\{x,y\}) = (V(T), (A(T) \setminus \{a\}) \cup \{a^{\star}\})$. The reversal of a subset $B$ of $A(T)$ from the tournament $T$ is done as a succession of arc-reversal operations. The tournament obtained from $T$ after reversing $B$ is denoted by ${\rm Inv}(T,B)$. Thus ${\rm Inv}(T,B) = (V(T), (A(T) \setminus B) \cup B^{\star})$, where $B^{\star} = \{b^{\star} : b \in B\}$. Given an arc $a = (x,y) \in A(T)$, the vertex set $\{x,y\}$ is denoted by $\mathcal{V}(a)$. Similarly, for $B \subseteq A(T)$, the vertex set $\displaystyle\bigcup_{b \in B} \mathcal{V}(b)$ is denoted by $\mathcal{V}(B)$.

Let us say that an arc $a$ is a {\it $v$-arc} when $v \in \mathcal{V}(a)$.
In Theorem~\ref{thmerdos}, the indecomposable tournament $T'$ is obtained from $T$ by adding the vertex $v$. 
It is easily seen that, equivalently, $T'$ is also obtained from any tournament $U$ such that $V(U) = V(T) \cup \{v\}$ and $U-v = T$, by a succession of $v$-arc-reversal operations. Thus Theorem~\ref{thmerdos} is equivalent to the following one.

\begin{thm} \label{thmerdos2}
Let $T$ be a tournament of order at least 5. For every $v \in V(T)$ such that $T-v$ is not transitive of odd order, there exists a set $B$ of $v$-arcs of $T$ such that ${\rm Inv}(T,B)$ is indecomposable.   
\end{thm}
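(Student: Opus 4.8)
The plan is to reduce Theorem~\ref{thmerdos2} to Theorem~\ref{thmerdos}, making precise the equivalence sketched in the paragraph preceding the statement. First I would set $S := T - v$. Since $v(T) \geq 5$, we have $v(S) = v(T) - 1 \geq 4$, and by hypothesis $S$ is not transitive of odd order, so Theorem~\ref{thmerdos} applies to $S$: there exist an indecomposable tournament $S'$ and a vertex $w \in V(S')$ such that $S' - w = S$.

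The crux is the following elementary observation, which is exactly the ``easily seen'' remark quoted above. If $T_1$ and $T_2$ are two tournaments with the same vertex set $V$ and a common distinguished vertex $v$ satisfying $T_1 - v = T_2 - v$, then $T_1$ and $T_2$ carry identical arcs on $V \setminus \{v\}$ and can differ only on $v$-arcs. Letting $B$ be the set of those $v$-arcs $a \in A(T_1)$ whose orientation in $T_2$ is $a^{\star}$, one gets ${\rm Inv}(T_1, B) = T_2$ directly from the definition of arc-reversal. Hence any tournament on $V$ agreeing with $T$ away from $v$ is reachable from $T$ by reversing a suitable set of $v$-arcs.

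To apply this I would first transport $S'$ onto the vertex set $V(T)$. Since $V(S') = V(S) \cup \{w\} = (V(T) \setminus \{v\}) \cup \{w\}$, the bijection fixing $V(S)$ pointwise and sending $w \mapsto v$ is an isomorphism from $S'$ onto a tournament $S''$ with $V(S'') = V(T)$ and $S'' - v = S = T - v$; because an isomorphism preserves indecomposability, $S''$ is indecomposable. Now $T$ and $S''$ are tournaments on the common vertex set $V(T)$ agreeing off $v$, so by the observation there is a set $B$ of $v$-arcs of $T$ with ${\rm Inv}(T, B) = S''$, which is indecomposable, as required.

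I expect no serious obstacle here, since the substantive content is entirely supplied by Theorem~\ref{thmerdos}. The only care needed is the bookkeeping of the relabeling $w \mapsto v$, together with the verification that reversing the set $B$ literally transforms $T$ into $S''$ rather than merely into an isomorphic copy; both are routine once the defining property of modules and the preservation of indecomposability under isomorphism are invoked.
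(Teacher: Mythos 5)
Your proposal is correct and follows exactly the route the paper itself takes: the paper derives Theorem~\ref{thmerdos2} from Theorem~\ref{thmerdos} via the remark (stated just before the theorem) that any two tournaments on the same vertex set agreeing off $v$ differ only in $v$-arcs, so the indecomposable one-point extension of $T-v$ is reachable from $T$ by reversing a set of $v$-arcs. Your write-up merely makes explicit the relabeling $w \mapsto v$ and the identity ${\rm Inv}(T,B) = S''$ that the paper leaves as ``easily seen,'' and both verifications are sound.
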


Question $(Q)$ has been investigated by us \cite{Index, Index2} when an allowed operation consists of an arbitrary arc-reversal operation. In \cite{Index2}, we  proved that every tournament $T$ of order at least 5 can be made indecomposable by reversing no more than $\left\lceil \frac{v(T)+1}{4} \right\rceil$ arcs, and that this bound is best possible because, for example, transitive tournaments needs this many arcs to be reversed. 
Formally, given a tournament $T$ of order at least 5,
the {\it decomposability arc-index} (called decomposability index in \cite{Index, Index2}) of $T$, denoted by $\delta(T)$,  is the smallest integer $m$ for which the tournament $T$ can be made indecomposable by reversing a set of $m$ arcs of $T$.
The decomposability arc-index is closely related to another index based on co-modules and called co-modular index. 
\begin{Convention} \normalfont 
	Let $T$ be a tournament. For $X \subseteq V(T)$, $\overline{X}$ denotes $V(T) \setminus X$.
\end{Convention}
The notion of co-module and related notions were introduced in \cite{Index2} as follows. 
Given a tournament $T$, a {\it co-module} of $T$ is a subset $M$ of $V(T)$ such that $M$ or $\overline{M}$ is a nontrivial module of $T$.
A {\it co-modular decomposition} of the tournament $T$ is a set of pairwise disjoint co-modules of $T$. A {\it $\Delta$-decomposition} of $T$ is a co-modular decomposition of $T$ which is of maximum size. Such a size is called the {\it co-modular index} of $T$, and is denoted by $\Delta(T)$. Notice that when $T$ is decomposable, we have $\Delta(T) \geq 2$.
For a nonnegative integer $n$, we denote by $\Delta(n)$ (resp. $\delta(n)$ when $n \geq  5$) the maximum of $\Delta(T)$ (resp. $\delta(T)$) over the tournaments $T$ of order $n$. The relationship between the above two indices is given by the following theorem obtained in \cite{Index2}.
\begin{thm} [\cite{Index2}] \label{deltan} 
For every tournament $T$ of order at least $5$, we have $\delta(T) = \left\lceil \frac{\Delta(T)}{2} \right\rceil$. Moreover, for every integer $n \geq 5$, we have $\Delta(n) = \left\lceil \frac{n+1}{2} \right\rceil$ and,  consequently, $\delta(n) = \delta(\underline{n})= \left\lceil \frac{\Delta(n)}{2} \right\rceil = \left\lceil \frac{n+1}{4} \right\rceil$.
\end{thm}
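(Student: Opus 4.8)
The plan is to prove the central identity $\delta(T) = \lceil \frac{\Delta(T)}{2} \rceil$ by two matching inequalities, then to compute $\Delta(n)$ directly, and finally to combine the two. Everything rests on one local fact about reversals: if $M$ is a module of $T$ that ${\rm Inv}(T,B)$ destroys, then $B$ must contain an arc joining a vertex of $M$ to a vertex of $\overline{M}$. Indeed, if $M$ is no longer a module there are $v \in \overline{M}$ and $x,y \in M$ with $(v,x)$ and $(v,y)$ oriented alike in $T$ but differently in ${\rm Inv}(T,B)$, so exactly one of the arcs on $\{v,x\}$ and $\{v,y\}$ lies in $B$, and that arc crosses the cut $(M,\overline{M})$. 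Since $M$ and $\overline{M}$ induce the same cut, the same holds for a co-module: to destroy the nontrivial module carried by a co-module $M$, the set $B$ must contain an arc of the cut $(M,\overline{M})$.

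\textbf{Lower bound.} I would fix a $\Delta$-decomposition $\{M_1,\dots,M_k\}$ with $k=\Delta(T)$ and suppose ${\rm Inv}(T,B)$ is indecomposable. By the observation above, for each $i$ the set $B$ contains an arc crossing the cut $(M_i,\overline{M_i})$. Because the $M_i$ are pairwise disjoint, each vertex lies in at most one $M_i$, so a single arc $\{p,q\}$ crosses at most two of these cuts, namely those indexed by the $M_i$ meeting $\{p,q\}$. Counting crossings gives $2|B| \geq k$, hence $|B| \geq \lceil \frac{\Delta(T)}{2} \rceil$, and therefore $\delta(T) \geq \lceil \frac{\Delta(T)}{2} \rceil$.

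\textbf{Upper bound.} For the reverse inequality I would pair the co-modules of a $\Delta$-decomposition and, for each pair, reverse a single arc crossing both of their cuts, spending $\lceil \frac{\Delta(T)}{2} \rceil$ arcs to cut every co-module. This is the delicate direction and the main obstacle: reversing these arcs destroys the chosen modules but can create new ones, so the real work is to choose the arcs and the pairing so that the resulting tournament has \emph{no} nontrivial module at all. I expect this to need a careful analysis driven by the modular decomposition of $T$, distinguishing whether the quotient is transitive or indecomposable, possibly invoking the single-vertex result of Theorem~\ref{thmerdos2} to treat the pieces; this is where the bulk of the argument lies.

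\textbf{Computation of $\Delta(n)$ and conclusion.} A singleton $\{x\}$ is a co-module exactly when $\overline{\{x\}}$ is a module, i.e.\ when $x$ is a source or a sink of $T$; since a tournament has at most one of each, at most two co-modules in any co-modular decomposition are singletons and all others have size at least $2$. With $k$ pairwise disjoint co-modules of total size at most $n$ this yields $2k-2 \leq n$, hence $\Delta(T) \leq \lceil \frac{n+1}{2} \rceil$. The bound is attained by $\underline{n}$: the source $\{0\}$, the sink $\{n-1\}$, and a maximum packing of the remaining vertices by size-$2$ intervals form $\lceil \frac{n+1}{2} \rceil$ pairwise disjoint co-modules, so $\Delta(n) = \lceil \frac{n+1}{2} \rceil$. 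Finally, since $x \mapsto \lceil \frac{x}{2} \rceil$ is nondecreasing, the identity $\delta(T) = \lceil \frac{\Delta(T)}{2} \rceil$ gives $\delta(n) = \lceil \frac{\Delta(n)}{2} \rceil = \lceil \frac{1}{2}\lceil \frac{n+1}{2} \rceil \rceil = \lceil \frac{n+1}{4} \rceil$, with the maximum attained at $\underline{n}$; hence $\delta(n) = \delta(\underline{n}) = \lceil \frac{n+1}{4} \rceil$.
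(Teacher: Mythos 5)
Your lower bound and your computation of $\Delta(n)$ are correct: the cut-crossing observation plus double counting over a $\Delta$-decomposition does give $\delta(T) \geq \left\lceil \frac{\Delta(T)}{2} \right\rceil$, and the singleton analysis (a singleton is a co-module exactly when its vertex is a source or a sink, hence at most two per co-modular decomposition) together with the packing of $\underline{n}$ by $\{0\}$, $\{n-1\}$ and disjoint twins gives $\Delta(n) = \left\lceil \frac{n+1}{2} \right\rceil$; the ceiling arithmetic at the end is also fine. But the upper bound $\delta(T) \leq \left\lceil \frac{\Delta(T)}{2} \right\rceil$ --- the substantive half of the theorem --- is not proved: you state a pairing plan and then explicitly defer ``the bulk of the argument.'' Be aware that the paper under review contains no proof of this statement at all; it is quoted from \cite{Index2}, and the content you defer is precisely the long construction carried out there. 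So what you have is a proof of one inequality and of the extremal computation, with the hard direction missing.

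Moreover, the deferred plan as stated would fail without the careful choices you postpone, and the paper itself contains a counterexample to it. Crossing the cut of each member of a $\Delta$-decomposition is necessary but far from sufficient: every nontrivial module of $T$ (including those overlapping the decomposition members --- an arc crossing the cut of a minimal co-module $I \subseteq M$ may have both endpoints inside the larger module $M$ and so miss the cut of $M$), and every module created by the reversal, must be handled. Concretely, for the tournament $T_n$ of Section~5 one has ${\rm mc}(T_n) = \{\{0,\ldots,n-2\},\{n-1\}\}$ and $\Delta(T_n)=2$, so your recipe reverses one arc joining $\{n-1\}$ to $\{0,\ldots,n-2\}$; the arc between $n-1$ and $1$ crosses both cuts, yet reversing it yields a decomposable tournament, since $\{0,n-1\}$ becomes a nontrivial module. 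Thus some single arc works (as the theorem asserts, $\delta(T_n)=1$), but not an arbitrary cross-cut arc, and your proposal supplies no mechanism for making the choice. The kind of global condition needed is visible in the present paper's machinery: by Fact~\ref{rinv}, a successful reversal must render the whole family $\mathcal{M}(T)$ strictly bipartite, which is exactly what the exact minimum transversals of Theorem~\ref{thhypergraphe} are engineered to achieve in the subtournament setting. The gap is therefore a missing idea, not a routine verification.
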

In this paper, we are interested in Question $(Q)$ by searching for an analogue of Theorem \ref{deltan} for another type of allowed operations motivated by Theorem~\ref{thmerdos2}, and leading to a new index for which we establish the relation with those of Theorem \ref{deltan}. This operation type, that we call {\it subtournament-reversal} operation, consists of reversing simultaneously all the arcs of a subtournament. It was considered in \cite{BBBP}, where the authors study the minimum number of such operations that must be successively applied to a tournament $T$ to make it transitive. Obviously subtournament-reversal operations include arc-reversal ones. For simplicity, given a tournament $T$ and a subset $X$ of $V(T)$, we write ${\rm Inv}(T,X)$ for ${\rm Inv}(T,A(T[X]))$. For example, the {\it dual} tournament of $T$ is the tournament  $T^{\star}={\rm Inv}(T, V(T)) = {\rm Inv}(T, A(T))$. Notice that $T$ and $T^{\star}$ share the same modules. In particular, $T$ is indecomposable if and only if $T^{\star}$ is. 

Let $T$ be a tournament of order at least $5$ which is not transitive of even order. Clearly there is a vertex $v \in V(T) $ such that $T-v$ is not transitive of odd order. By Theorem~\ref{thmerdos2}, there exists a subset $B$ of $v$-arcs of $T$ such that ${\rm Inv}(T,B)$ is indecomposable. In terms of subtournament-reversal operations, this yields the following. Since ${\rm Inv}(T,B) = {\rm Inv}({\rm Inv}(T, \mathcal{V}(B)), \mathcal{V}(B) \setminus \{v\})$, then $T$ can be made indecomposable by at most two successive subtournament-reversal operations. This naturally leads to the following question: can the tournament $T$ be made indecomposable by a single subtournament-reversal operation ? We will prove that the answer is yes, i.e., there exists a subset $X$ of $V(T)$ such that ${\rm Inv}(T, X)$ is indecomposable. This result suggests considering a new decomposability index for tournaments of orders at least $5$, other than transitive ones of even orders. Let $T$ be such a tournament. The {\it decomposability subtournament-index} of $T$, which we denote by $\delta'(T)$, is the smallest integer $m$ for which there exists $X \in \binom{V(T)}{m}$ such that ${\rm Inv}(T, X)$ is indecomposable.
For every integer $n \geq 5$, we denote by $\delta'(n)$ the maximum of $\delta'(T)$ over the tournaments $T$ of order $n$, which are non-transitive when $n$ is even. Theorem~\ref{thm principal} below is the main result of the paper.    
 
 \begin{Notation} \normalfont
 	Let $T$ be a tournament. A {\it minimal co-module} of $T$ is a co-module $M$ of $T$ that is minimal in the set of co-modules of $T$ ordered by inclusion. The set of minimal co-modules of $T$ is denoted by $\text{mc}(T)$.
 \end{Notation}
 
 \begin{thm} \label{thm principal}
 	Given a tournament $T$ of order at least $5$ that is not transitive of even order, there exists $X \in \binom{\cup{\rm mc}(T)}{\Delta(T)}$ such that ${\rm Inv}(T, X)$ is indecomposable. Moreover, we have $\delta'(T) = \Delta(T)$. 
 	\end{thm}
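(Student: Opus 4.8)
The plan is to establish the two halves of $\delta'(T)=\Delta(T)$ separately, obtaining the displayed membership as part of the upper bound. \emph{Lower bound.} First I would record the effect of a subtournament-reversal on a fixed subset $N\subseteq V(T)$. Since ${\rm Inv}(T,X)$ reverses precisely the arcs with both endpoints in $X$, a short check shows that $N$ is still a module of ${\rm Inv}(T,X)$ if and only if $X\subseteq N$, or $X\subseteq\overline{N}$, or $N\subseteq X$; equivalently, a nontrivial module $N$ is destroyed exactly when $X\cap N\neq\emptyset$, $X\cap\overline{N}\neq\emptyset$ and $N\not\subseteq X$. Now fix a $\Delta$-decomposition $\{M_1,\dots,M_{\Delta(T)}\}$. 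For each $i$, either $M_i$ or $\overline{M_i}$ is a nontrivial module $N_i$, which must be destroyed because ${\rm Inv}(T,X)$ is indecomposable. In the first case destruction gives $X\cap M_i=X\cap N_i\neq\emptyset$, in the second it gives $X\cap M_i=X\cap\overline{N_i}\neq\emptyset$; either way $X$ meets $M_i$. As the $M_i$ are pairwise disjoint, $|X|\geq\Delta(T)$, hence $\delta'(T)\geq\Delta(T)$.

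\emph{Reduction for the upper bound.} Because the co-modules of $T$ form a finite poset under inclusion, each block of a $\Delta$-decomposition contains a minimal co-module; replacing each block by such a minimal co-module inside it (disjointness is preserved) yields a $\Delta$-decomposition $\{C_1,\dots,C_{\Delta(T)}\}$ with every $C_i\in{\rm mc}(T)$. Choosing one vertex $x_i\in C_i$ produces $X=\{x_1,\dots,x_{\Delta(T)}\}\in\binom{\cup{\rm mc}(T)}{\Delta(T)}$. If $T$ is already indecomposable then $\Delta(T)=0$ and $X=\emptyset$ works; otherwise $T$ is decomposable, so $\Delta(T)\geq2$ and any such transversal meets both some block and its complement. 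It then remains to choose the representatives $x_i$ so that ${\rm Inv}(T,X)$ is indecomposable, which gives $\delta'(T)\leq\Delta(T)$ and the membership assertion simultaneously.

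\emph{The crux.} The main work is to show that, for a suitable transversal, ${\rm Inv}(T,X)$ has no nontrivial module. I would split the candidate nontrivial modules $N$ of ${\rm Inv}(T,X)$ into those that were already modules of $T$ and those created by the reversal. A surviving module of $T$ satisfies $X\subseteq N$, $X\subseteq\overline{N}$, or $N\subseteq X$; I would exclude these using the maximality of the $\Delta$-decomposition together with the freedom in placing each $x_i$ within its block $C_i$, arranging that $X$ straddles the boundary of every nontrivial module of $T$. The genuinely new modules are the principal obstacle: such an $N$ can become uniform only through the $\binom{\Delta(T)}{2}$ arcs reversed inside $X$, so one must control how these internal reversals interact with the quotients appearing in the modular decomposition of $T$. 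Here I expect to invoke the structural description of $\Delta$-decompositions from \cite{Index2} underlying Theorem~\ref{deltan}, treat the prime and transitive quotients separately, and use the exclusion of transitive tournaments of even order to settle the parity constraint that would otherwise force an extra vertex into $X$. Combining this construction with the lower bound yields $\delta'(T)=\Delta(T)$.
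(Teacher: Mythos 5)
Your lower bound is correct and is essentially the paper's: your overlap criterion is Assertions~1 and 2 of Fact~\ref{rinv}, and forcing $X$ to meet every block of a $\Delta$-decomposition is the paper's observation that an indecomposable ${\rm Inv}(T,X)$ makes $X$ a transversal of ${\rm mc}(T)$, whence $|X|\geq\nu({\rm mc}(T))=\Delta(T)$. The reduction of a $\Delta$-decomposition to one consisting of minimal co-modules is also fine (it is the paper's equality $\Delta(T)=\nu({\rm mc}(T))$, stated in (\ref{eq nu = Delta})). But everything after that is a plan, not a proof: ``I would exclude these using\ldots'', ``I expect to invoke\ldots'' defer precisely the mathematical content of the paper. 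What is actually needed is (i) Theorem~\ref{thhypergraphe}: the construction of an \emph{exact} minimum transversal $R$ of ${\rm mc}(T)$ by which $\mathcal{M}(T)$ is strictly bipartite, via the parity-alternating choice $r(C)$ inside each transitive component --- and this is exactly where the hypothesis that $T$ is not transitive of even order is consumed, since it makes $r(C)$ well defined --- and (ii) Theorem~\ref{pinv}: a delicate argument that for $\Delta(T)\geq 3$ \emph{every} $R\in{\rm tr}(T)$ works, whose engine is Claim~\ref{claim}, showing that any nontrivial module of ${\rm Inv}(T,R)$ would have to be an exact transversal of ${\rm mc}(T)$, from which a contradiction is extracted. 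Neither of these steps, nor any substitute for them, appears in your sketch.

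Two concrete failure points show the gap is real and not merely presentational. First, the representatives cannot be chosen freely: consecutive twins in a transitive component overlap, so picking one vertex per matching block can still place an entire twin inside $X$. If $C=\{v_0,v_1,v_2,v_3\}$ is a transitive component and your matching uses $\{v_0,v_1\}$ and $\{v_2,v_3\}$, the representatives $v_1,v_2$ give $\{v_1,v_2\}\subseteq X$, and by your own survival criterion this twin of $T$ remains a nontrivial module of ${\rm Inv}(T,X)$; the paper's parity rule (\ref{eq C'}) and the exactness of the transversal exist precisely to exclude this, and exactness is then used centrally in both claims of Theorem~\ref{pinv}. Second, even a transversal that straddles every nontrivial module of $T$ is provably insufficient when $\Delta(T)=2$: for the tournament $T_n$ of Section~5, $\{1,n-1\}\in{\rm tr}(T_n)$ overlaps the unique nontrivial module $\{0,\ldots,n-2\}$, yet ${\rm Inv}(T_n,\{1,n-1\})$ acquires the new module $\{0,n-1\}$. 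The paper closes this case not within your framework but by importing Theorem~\ref{deltan} from \cite{Index2} (since $\delta(T)=\lceil\Delta(T)/2\rceil=1$, a single arc reversal, i.e. a two-vertex subtournament reversal, suffices) and only afterwards verifying that the resulting $X$ lies in ${\rm tr}(T)$. Your sketch makes no such case split and supplies no mechanism against the ``genuinely new'' modules you rightly identify as the principal obstacle, so the upper bound $\delta'(T)\leq\Delta(T)$, which is the heart of the theorem, remains unproven.
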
 
The next result (see Corollary~\ref{cor principal} below) is an immediate consequence of Theorems~\ref{deltan} and \ref{thm principal} and the following two simple facts, for which we omit the proofs.

\begin{Fact} We have $\delta'(6)=3$.
\end{Fact}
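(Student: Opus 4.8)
The plan is to exploit Theorem~\ref{thm principal}, which gives $\delta'(T) = \Delta(T)$ for every non-transitive tournament of order $6$, so that by the definition of $\delta'(6)$ we have $\delta'(6) = \max\{\Delta(T) : v(T) = 6 \text{ and } T \text{ is non-transitive}\}$. Theorem~\ref{deltan} already yields $\Delta(T) \leq \Delta(6) = \left\lceil \frac{7}{2}\right\rceil = 4$, so it remains only to prove the two inequalities $\delta'(6) \geq 3$ and $\delta'(6) \leq 3$. The first I would obtain by exhibiting a single non-transitive tournament of order $6$ carrying three pairwise disjoint co-modules; the second, which is the substantive part, by showing that the extremal value $\Delta(T) = 4$ forces $T$ to be transitive.

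For the lower bound I would take $T_0$ to be the tournament obtained from the $3$-cycle $(\{0,1,2\},\{(0,1),(1,2),(2,0)\})$ by replacing each vertex with a module isomorphic to $\underline{2}$. The three substituted copies are pairwise disjoint nontrivial modules of size $2$, hence three pairwise disjoint co-modules, so $\Delta(T_0) \geq 3$; and since the quotient $3$-cycle contains a circuit, $T_0$ is non-transitive. By Theorem~\ref{thm principal}, $\delta'(T_0) = \Delta(T_0) \geq 3$, whence $\delta'(6) \geq 3$.

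For the upper bound I would argue by contradiction, assuming a non-transitive $T$ of order $6$ admits a co-modular decomposition $\{C_1,C_2,C_3,C_4\}$ of size $4$. The first observation is that a singleton $\{x\}$ is a co-module precisely when $\overline{\{x\}}$ is a module, i.e. when $x$ is dominant (beats every other vertex) or recessive (loses to every other vertex); since a tournament has at most one dominant and at most one recessive vertex, at most two of the $C_i$ are singletons. As the $C_i$ are pairwise disjoint with $\sum_i |C_i| \leq 6$ and each $|C_i| \leq 3$, the only size profile avoiding three singletons is $(2,2,1,1)$, which moreover uses all six vertices. Writing the two singletons as a dominant vertex $a$ and a recessive vertex $b$, I would note that for either size-$2$ part $C_i$ the complement $\overline{C_i}$ contains both $a$ and $b$ and so cannot be a module (any $v \in C_i$ is beaten by $a$ but beats $b$, violating the uniformity condition); hence each size-$2$ part is itself a twin module. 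Then $\{\{a\},\{b\},C_1,C_2\}$ is a modular partition whose quotient is a $4$-vertex tournament with a source and a sink, necessarily transitive, and substituting the (transitive) $2$-vertex tournaments $T[C_1]$ and $T[C_2]$ into it makes $T$ transitive — a contradiction. This yields $\Delta(T) \leq 3$, hence $\delta'(6) \leq 3$, and combining the two bounds gives $\delta'(6) = 3$.

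I expect the size-profile analysis — ruling out every profile except $(2,2,1,1)$ and then forcing transitivity in that case — to be the only real obstacle; the singleton/dominant-vertex counting and the final substitution argument are routine.
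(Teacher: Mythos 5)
Your proof is correct and complete. Note that the paper explicitly omits the proof of this Fact, so there is no printed argument to compare against; your route is the natural one and almost certainly the intended one. All the steps check out: the reduction $\delta'(6)=\max\{\Delta(T): v(T)=6,\ T \text{ non-transitive}\}$ via Theorem~\ref{thm principal} is legitimate and non-circular, since Theorem~\ref{thm principal} is proved independently of this Fact (the Fact feeds only into Corollary~\ref{cor principal}); the blown-up $3$-cycle gives three pairwise disjoint twins, hence $\Delta(T_0)\geq 3$, and is non-transitive; and in the upper bound the profile enumeration $(1,1,1,1)$, $(2,1,1,1)$, $(3,1,1,1)$, $(2,2,1,1)$ is exhaustive for four disjoint nonempty co-modules in a $6$-set, with only $(2,2,1,1)$ surviving the singleton count. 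Two small remarks: your ``at most two singleton co-modules'' observation is exactly Assertion~1 of Lemma~\ref{comod part} and could simply be cited rather than re-derived (though your dominant/recessive derivation is fine and makes the $(2,2,1,1)$ analysis self-contained, since you reuse the fact that the two singletons are one source $a$ and one sink $b$); and your appeal to $\Delta(6)=4$ from Theorem~\ref{deltan} is ultimately superfluous, since your contradiction argument directly yields $\Delta(T)\leq 3$ for every non-transitive $T$ of order $6$. The final step --- $\overline{C_i}$ fails to be a module because $a$ beats $v\in C_i$ while $v$ beats $b$, forcing each $C_i$ to be a twin, after which the quotient on $\{\{a\},C_1,C_2,\{b\}\}$ is a $4$-tournament with source and sink, hence transitive, and the lexicographic substitution of transitive blocks into a transitive quotient is transitive --- is airtight and contradicts non-transitivity, as you claim.
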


\begin{Fact}
	Given an even integer $n \geq 8$, there exist non-transitive tournaments $T$ of order $n$ such that $\Delta(T) = \Delta(n) = \frac{n}{2}+1$.
\end{Fact}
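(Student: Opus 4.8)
The plan is to produce, for each even $n \geq 8$, an explicit non-transitive tournament $T$ of order $n$ realizing the maximum co-modular index. By Theorem~\ref{deltan} we have $\Delta(n) = \left\lceil \frac{n+1}{2}\right\rceil = \frac{n}{2}+1$ for even $n$, and $\Delta(T) \leq \Delta(n)$ holds for every tournament of order $n$ by definition of $\Delta(n)$. Hence it suffices to exhibit a non-transitive $T$ together with a co-modular decomposition of $T$ of size $\frac{n}{2}+1$; this forces $\Delta(T) = \Delta(n) = \frac{n}{2}+1$.

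Write $n = 2m$ with $m \geq 4$. First I would fix any non-transitive tournament $R$ of order $m-1$; such a tournament exists precisely because $m-1 \geq 3$ (for $m-1 = 3$ take the $3$-cycle). Let $T_0$ be the tournament of order $2(m-1) = n-2$ obtained from $R$ by replacing each vertex by a copy of $\underline{2}$, i.e. $V(T_0)$ is partitioned into $m-1$ pairs $P_1, \dots, P_{m-1}$, each $T_0[P_i]$ is a single arc, and for $i \neq j$ every vertex of $P_i$ beats every vertex of $P_j$ exactly when the corresponding vertex of $R$ beats the other (a lexicographic substitution). Finally let $T$ be obtained from $T_0$ by adding a source $s$ and a sink $t$: $s$ dominates every vertex of $V(T_0) \cup \{t\}$ and $t$ is dominated by every vertex of $V(T_0) \cup \{s\}$. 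Then $v(T) = n$.

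The three verifications are routine. For non-transitivity, since $R$ is non-transitive it contains a $3$-cycle, and choosing one representative vertex in each of the three corresponding pairs yields a $3$-cycle of $T_0$, hence of $T$; so $T$ is not transitive. For the decomposition, each pair $P_i$ is a module of $T_0$ by the substitution construction, and it remains a module of $T$ because the only added vertices $s$ and $t$ are uniform over $V(T_0)$; being of size $2$, each $P_i$ is a nontrivial module, hence a co-module. Moreover $\{s\}$ is a co-module because its complement $V(T)\setminus\{s\}$ is a nontrivial module of $T$ (as $s$ is a source, every pair of vertices of $V(T)\setminus\{s\}$ is seen uniformly by $s$), and likewise $\{t\}$ is a co-module. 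The $m+1$ sets $\{s\}$, $\{t\}$, $P_1, \dots, P_{m-1}$ are pairwise disjoint, so they form a co-modular decomposition of $T$ of size $m+1 = \frac{n}{2}+1$.

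The main (and essentially only) delicate point is accounting for the extra ``$+1$'' beyond the $m-1 = \frac{n}{2}-1$ pair-modules: this is supplied by the two singleton co-modules $\{s\}$ and $\{t\}$, which requires $T$ to possess both a global source and a global sink while still being non-transitive. This is exactly what the construction arranges, and it is also why the hypothesis $n \geq 8$ is needed: it guarantees $m-1 \geq 3$, so that the ``middle'' tournament $R$ can be chosen non-transitive --- mirroring the separate treatment of $n=6$, where no such non-transitive middle exists.
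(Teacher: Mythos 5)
Your proof is correct, and it is worth noting that the paper explicitly omits the proof of this Fact, so your argument fills a gap rather than duplicating anything. All the key verifications check out: the upper bound $\Delta(T)\leq\Delta(n)=\frac{n}{2}+1$ follows from Theorem~\ref{deltan} and the definition of $\Delta(n)$; each pair $P_i$ is a nontrivial module of $T$ (the substitution makes it a module of $T_0$, and the added source $s$ and sink $t$ are uniform over it); $\{s\}$ and $\{t\}$ are co-modules because their complements are nontrivial modules; and these $m+1$ sets are pairwise disjoint, giving the matching lower bound. Your closing observation is also on point: using two singleton co-modules is consistent with Assertion~1 of Lemma~\ref{comod part} (at most two singletons can occur), and the requirement $m-1\geq 3$ for a non-transitive middle tournament is exactly why $n=6$ is excluded from the Fact and treated separately via $\delta'(6)=3$.
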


\begin{cor} \label{cor principal}
 Given a tournament $T$ of order at least $5$ that is not transitive of even order, we have $\delta(T) =\left\lceil \frac{\delta'(T)}{2} \right\rceil = \left\lceil \frac{\Delta(T)}{2} \right\rceil$. Moreover, for every integer $n\geq 5$, we have 
 
 \begin{equation*}
 \delta'(n)= \
 \begin{cases}
  \left\lceil \frac{n+1}{2} \right\rceil \ \ \ \ \text{if} \ \  n \neq 6,\\
 
 \ \ \ 3 \ \ \ \ \ \ \   \text{if} \ \ n=6.
 \end{cases}
 \end{equation*} 
 \end{cor}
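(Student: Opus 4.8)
The plan is to prove, for a decomposable $T$ (the indecomposable case is degenerate: $\Delta(T)=0$ and $X=\emptyset$ works), the two inequalities $\delta'(T)\le\Delta(T)$ and $\delta'(T)\ge\Delta(T)$; the upper bound will in fact produce a set $X\in\binom{\cup\,{\rm mc}(T)}{\Delta(T)}$ with ${\rm Inv}(T,X)$ indecomposable, which is the first assertion, and the two inequalities together give $\delta'(T)=\Delta(T)$.

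The tool underlying both bounds is an elementary \emph{survival criterion}, which I would prove first: if $N$ is a nontrivial module of $T$ and $X\subseteq V(T)$, then $N$ is still a module of ${\rm Inv}(T,X)$ if and only if $X\cap N=\emptyset$, or $N\subseteq X$, or $X\subseteq N$. The verification is direct, since an arc $(v,x)$ gets reversed exactly when $v,x\in X$: the module condition for $N$ can break only via some $v\in X\cap\overline N$ and a pair $x,y\in N$ exactly one of which lies in $X$, and such a configuration exists precisely when $X$ meets $\overline N$ and meets $N$ without containing it. Thus to \emph{destroy} $N$ the set $X$ must meet both $N$ and $\overline N$.

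For the lower bound I fix a $\Delta$-decomposition $\{M_1,\dots,M_{\Delta(T)}\}$. If ${\rm Inv}(T,X)$ is indecomposable then every nontrivial module of $T$ is destroyed, since a surviving one keeps its (nontrivial) cardinality. Each $M_i$ is a co-module, so $M_i$ or $\overline{M_i}$ is a nontrivial module; destroying it forces $X$ to meet both that module and its complement, hence in either case $X\cap M_i\neq\emptyset$. As the $M_i$ are pairwise disjoint, $|X|\ge\Delta(T)$, so $\delta'(T)\ge\Delta(T)$.

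For the upper bound I would first refine the decomposition: every co-module contains a minimal one, so replacing each part of a $\Delta$-decomposition by a minimal co-module inside it gives $\Delta(T)$ pairwise disjoint \emph{minimal} co-modules $M_1',\dots,M_{\Delta(T)}'$; any transversal $X$ (one vertex per $M_i'$) then lies in $\binom{\cup\,{\rm mc}(T)}{\Delta(T)}$. It remains to choose the representatives so that ${\rm Inv}(T,X)$ is indecomposable, and I expect this to be the main obstacle. By the survival criterion one must pick $X$ so that no nontrivial module $N$ satisfies $X\cap N=\emptyset$, $N\subseteq X$, or $X\subseteq N$ --- the naive transversal can fail (for $\underline 5$, the choice $\{0,1,4\}$ leaves $\{0,1\}\subseteq X$ as a surviving module) --- and one must also rule out modules \emph{newly created} by the reversal. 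I anticipate organizing this along the modular decomposition tree of $T$, handling transitive (linear) quotient nodes and prime nodes separately and choosing representatives in an alternating pattern on each linear node (the model case is $T=\underline n$ with $X$ the even-indexed vertices, which works for $n$ odd). The hypothesis that $T$ is not transitive of even order is used exactly here: for $\underline n$ with $n$ even such an alternating transversal has only $n/2<\Delta(\underline n)$ vertices, reflecting a real parity obstruction, while in every admissible case the counts match and a transversal of size $\Delta(T)$ can be arranged to kill all modules and create none. Finally, the displayed Corollary follows at once: $\delta(T)=\lceil\Delta(T)/2\rceil=\lceil\delta'(T)/2\rceil$ by Theorem~\ref{deltan}, and the stated values of $\delta'(n)$ come from $\Delta(n)=\lceil(n+1)/2\rceil$ together with the two Facts, the value $3$ at $n=6$ being the one place where the maximum $\Delta(n)$ is attained only by the excluded transitive tournament.
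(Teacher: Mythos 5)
Your survival criterion, your lower bound, and your closing derivation are all correct and coincide with the paper's. The criterion is the paper's Fact~\ref{rinv} (Assertions~1 and~2); your lower-bound argument is exactly how the paper gets $\delta'(T)\geq\Delta(T)$, namely Assertion~3 of Fact~\ref{rinv} (the reversal set must be a transversal of ${\rm mc}(T)$) together with $\Delta(T)=\nu({\rm mc}(T))$ from (\ref{eq nu = Delta}); and your final paragraph is precisely the paper's own (omitted) proof of the corollary, an immediate combination of Theorem~\ref{deltan}, Theorem~\ref{thm principal}, and the two Facts. Had you simply cited Theorem~\ref{thm principal}, the corollary would be done.

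But you undertake to prove the upper bound $\delta'(T)\leq\Delta(T)$ yourself, and there the proposal has a genuine gap: the existence of a good $\Delta(T)$-set is settled by assertion (``a transversal of size $\Delta(T)$ can be arranged to kill all modules and create none''), and this assertion is the entire content of the paper's Sections~3 and~4. Two distinct things are missing. First, even granting a transversal that destroys every nontrivial module of $T$ (this is the strict-bipartiteness property, which the paper establishes in Theorem~\ref{thhypergraphe} via the explicit alternating choice (\ref{eq C'}) on transitive components --- essentially your ``alternating pattern,'' but built from transitive components of $T$ itself, not node-by-node along the modular decomposition tree), one must still rule out modules \emph{newly created} by the reversal; this is a separate global argument (Theorem~\ref{pinv}), in which one shows that any nontrivial module of ${\rm Inv}(T,R)$ would have to be an exact transversal of ${\rm mc}(T)$ and then derives a contradiction, and nothing in a local choice of representatives obviously yields it. Second, your blanket claim that ``in every admissible case'' the arrangement succeeds is false when $\Delta(T)=2$: the paper's Section~5 exhibits tournaments $T_n$ with $\Delta(T_n)=2$ and $\{1,n-1\}\in{\rm tr}(T_n)$ for which ${\rm Inv}(T_n,\{1,n-1\})$ is decomposable, so even a perfectly arranged exact minimum transversal by which $\mathcal{M}(T)$ is strictly bipartite can fail. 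The paper handles $\Delta(T)=2$ by a different tool altogether: it imports $\delta(T)=\left\lceil \frac{\Delta(T)}{2}\right\rceil=1$ from Theorem~\ref{deltan}, notes that a single arc reversal is a two-vertex subtournament reversal, and only afterwards verifies (via Fact~\ref{rinv}) that the resulting pair lies in ${\rm tr}(T)$. So your plan needs both the missing no-new-modules argument for $\Delta(T)\geq 3$ and a non-constructive detour for $\Delta(T)=2$; as written, neither is supplied.
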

The rest of the paper is organized as follows. Section~2 contains required results on modules and co-modules. In Section~3, we prove an important tool for the proof of Theorem~\ref{thm principal} (see Theorem~\ref{thhypergraphe}), which consists of a strengthening of a theorem of Erd\H{o}s et al.~\cite[Theorem 3]{EFHM} about the family of nontrivial modules of a tournament. In Section 4, we prove a finer version of Theorem~\ref{thm principal} (see Theorem~\ref{thm principal2}). Some additional remarks are discussed in Section~5. 

\section{Some known results on modules and co-modules}
We now review some useful properties of the modules and co-modules of a tournament. We begin by the following well-known properties of modules. 

\begin{prop} \label{propo modules}
Let $T$ be a tournament.
\begin{enumerate}
\item Given a subset $W$ of $V(T)$, if $M$ is a module of $T$, then $M \cap W$ is a module of $T[W]$.
\item Given a module $M$ of $T$, if $N$ is a module of $T[M]$, then $N$ is also a module of $T$.
 \item If $M$ and $N$ are modules of $T$, then $M \cap N$ is also a module of $T$.
\end{enumerate}
\end{prop}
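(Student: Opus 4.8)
The plan is to verify each of the three statements directly from the definition of a module, exploiting one unifying observation: induced subtournaments inherit their arcs from $T$, so for any $X \subseteq V(T)$ and any distinct $u, w \in X$ we have $(u,w) \in A(T[X])$ if and only if $(u,w) \in A(T)$. Consequently, whenever an external vertex $v$ and a pair $x, y$ all lie in a common vertex set, the defining ``iff'' condition of a module can be tested indifferently in $T$ or in the relevant induced subtournament. Each part then reduces to fixing the witnesses prescribed by the definition and invoking this transfer together with the module hypothesis already available.

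For item (1), I would fix $x, y \in M \cap W$ and an external vertex $v \in W \setminus (M \cap W)$, and first check that $v \notin M$: indeed $v \in W$, so $v \in M$ would force $v \in M \cap W$, a contradiction. Hence $v \in V(T) \setminus M$, and since $M$ is a module of $T$ the equivalence $(v,x) \in A(T) \Leftrightarrow (v,y) \in A(T)$ holds; transferring it to $T[W]$ (all three vertices lying in $W$) via the inheritance observation gives that $M \cap W$ is a module of $T[W]$.

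For item (2), the key step is a case distinction on the external vertex $v \in V(T) \setminus N$, after recording that $N \subseteq M$, because $N$ being a module of $T[M]$ forces $N \subseteq V(T[M]) = M$. If $v \in M$, then $v \in M \setminus N$ and the module property of $N$ in $T[M]$ applies to $x, y, v$; if $v \notin M$, then $v \in V(T) \setminus M$ and the module property of $M$ in $T$ applies to the pair $x, y \in N \subseteq M$. In either case the desired equivalence is obtained, so $N$ is a module of $T$. Item (3) is handled by the same kind of case split: for $x, y \in M \cap N$ and $v \in V(T) \setminus (M \cap N)$, at least one of $v \notin M$, $v \notin N$ holds, and in the respective case the module property of $M$ (resp.\ $N$), applied to $x, y$ which lie in $M \cap N \subseteq M$ (resp.\ $\subseteq N$), yields the condition for $M \cap N$.

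There is no genuine obstacle here; these are standard closure properties. The only points requiring care are the membership bookkeeping---confirming $v \notin M$ in (1), the inclusion $N \subseteq M$ in (2), and selecting the correct module in the case analyses of (2) and (3)---together with the routine but essential remark that the arc relation is unchanged under passage to induced subtournaments, which is precisely what lets the ``iff'' condition migrate between $T$ and $T[X]$.
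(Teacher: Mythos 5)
Your proof is correct: all three parts are verified exactly as the definition demands, and the membership bookkeeping ($v \notin M$ in part~1, $N \subseteq V(T[M]) = M$ in part~2, the case split $v \notin M$ or $v \notin N$ in part~3) is handled properly. The paper states this proposition as well-known and gives no proof at all, so there is nothing to compare against; your direct verification, resting on the observation that the arc relation is preserved under passage to induced subtournaments, is the standard argument one would supply.
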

We next review some basic properties of co-modules and co-modular decompositions.
\begin{lem} [\cite{Index2}] \label{comod part}
 Given a decomposable tournament $T$, consider a co-modular decomposition $D$ of $T$. The following assertions are satisfied.
 \begin{enumerate}
 \item The tournament $T$ admits at most two singletons which are co-modules of $T$. In particular, $D$ contains at most two singletons. 
  \item If $D$ contains an element $M$ which is not a module of $T$, then the elements of $D \setminus \{M\}$ are nontrivial modules of $T$.
  \item If $D$ is a $\Delta$-decomposition of $T$ and $v(T) \geq 4$, then $D$ contains a nontrivial module of $T$.
 \end{enumerate}
\end{lem}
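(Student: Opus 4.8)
The plan is to prove the three assertions in turn, using the first two to derive the third. Throughout I would keep in mind that every co-module is nonempty and distinct from $V(T)$, and that a set $M$ is a co-module exactly when $M$ or $\overline{M}$ is a nontrivial module.

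\emph{Assertion 1.} A singleton $\{u\}$ is always a trivial module, so $\{u\}$ is a co-module if and only if $\overline{\{u\}} = V(T)\setminus\{u\}$ is a nontrivial module, which (since $v(T)\geq 3$) just means it is a module. As $u$ is the only vertex outside $V(T)\setminus\{u\}$, this happens precisely when $u$ relates in the same way to every other vertex, i.e. when $u$ dominates all other vertices or is dominated by all of them. A tournament has at most one vertex dominating all others and at most one vertex dominated by all others, since two vertices of the same kind would have to beat each other. Hence at most two singletons are co-modules, and in particular $D$, being a set of co-modules, contains at most two singletons.

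\emph{Assertion 2.} This is where the real work lies. Let $M \in D$ be a co-module that is not a module; then $\overline{M}$ is a nontrivial module. Fix $N \in D \setminus \{M\}$; I want $N$ to be a nontrivial module. Since $N$ is a co-module it is nonempty and $\neq V(T)$, so it suffices to rule out the two ways $N$ could fail to be a nontrivial module: $N$ being a non-module, or $N$ being a singleton. In both of these cases $\overline{N}$ is a nontrivial module, so I may assume $\overline{M}$ and $\overline{N}$ are nontrivial modules while $M \cap N = \emptyset$. Writing $I = \overline{M} \cap \overline{N}$, disjointness gives the partition $V(T) = N \sqcup I \sqcup M$ with $\overline{M} = N \cup I$ and $\overline{N} = M \cup I$. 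The key computation is a pivot argument: for $v \in I$, $x \in N$ and any fixed $b \in M$, the module $\overline{M}$ (with $b$ outside) forces $b$ to relate to $x$ and to $v$ alike, while the module $\overline{N}$ (with $x$ outside) forces $x$ to relate to $b$ and to $v$ alike; chaining these two equivalences shows that whether $v$ beats $x$ depends only on $v$ and $b$, not on $x$. Thus every vertex of $I$ relates uniformly to $N$, and vertices of $M$ already do so because $N \subseteq \overline{M}$ and $\overline{M}$ is a module. Hence $N$ is a module of $T$, contradicting that $N$ is a non-module. A short separate check disposes of the singleton case: if $N = \{x\}$, then $M$ not being a module forces some $i \in I$ to split $M$ (beating one vertex and losing to another), and running the same two module constraints produces a vertex of $M$ beaten by $x$ and another beating $x$, contradicting that $x$ relates uniformly to $\overline{N} \supseteq M$. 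So $N$ is necessarily a nontrivial module. I expect this pivot argument, together with correctly organizing the non-module/singleton alternatives, to be the main obstacle.

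\emph{Assertion 3.} I would argue by contradiction, assuming the $\Delta$-decomposition $D$ contains no nontrivial module. Since $T$ is decomposable we have $\Delta(T) \geq 2$, so $|D| \geq 2$. If some element of $D$ were a non-module, assertion~2 would make every other element of $D$ a nontrivial module, which is impossible under our assumption; hence every element of $D$ is a trivial module, and being a co-module each must be a singleton. By assertion~1 there are at most two singleton co-modules, so $|D| = 2$, say $D = \{\{x\}, \{y\}\}$, where one of $x,y$ dominates all other vertices and the other is dominated by all other vertices. Then $V(T)\setminus\{x\}$ and $V(T)\setminus\{y\}$ are nontrivial modules, so by Proposition~\ref{propo modules}(3) their intersection $W = V(T) \setminus \{x,y\}$ is a module. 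Because $v(T) \geq 4$ we have $|W| \geq 2$, so $W$ is nontrivial, and then $\{\{x\}, \{y\}, W\}$ is a co-modular decomposition of size $3 > |D|$, contradicting the maximality of $D$. Therefore $D$ must contain a nontrivial module. The only delicate point is that $v(T) \geq 4$ is exactly what guarantees that $W$ is nontrivial, which is why this hypothesis is needed.
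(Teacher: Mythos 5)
Your proposal is correct in all three assertions. One caveat about the comparison you were asked for: the paper states this lemma with a citation to \cite{Index2} and reproduces no proof, so there is no internal argument to measure your write-up against; judged on its own terms, it is sound. Assertion~1 correctly reduces a singleton co-module $\{u\}$ to $\overline{\{u\}}$ being a module, i.e.\ to $u$ dominating or being dominated by all other vertices, of which there is at most one each. In Assertion~2 your pivot computation is valid: with $V(T) = M \sqcup N \sqcup I$, the module $\overline{N} = M \cup I$ gives $T(v,x) = T(b,x)$ for $v \in I$, $x \in N$, $b \in M$, and the module $\overline{M} = N \cup I$ gives that $b$ is constant on $N$, so every vertex of $I$ (and trivially of $M$) is constant on $N$; your separate singleton check also closes correctly, including the observation that the splitter of $M$ must lie in $I$ because $x$ is constant on $\overline{N} \supseteq M$. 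Assertion~3 correctly combines the first two with Assertion~3 of Proposition~\ref{propo modules} and uses $v(T) \geq 4$ exactly where needed, so that $W = V(T) \setminus \{x,y\}$ is a nontrivial module and $\{\{x\},\{y\},W\}$ beats the maximality of $D$. One remark on economy: your two subcases in Assertion~2 can be merged. In either case $\overline{N}$ is a nontrivial module, as is $\overline{M}$; since $M \cap N = \varnothing$ we have $\overline{M} \setminus \overline{N} = N \neq \varnothing$, and the standard module identity --- if $A$ and $B$ are modules of $T$ with $A \setminus B \neq \varnothing$, then $B \setminus A$ is a module --- applied to $A = \overline{M}$, $B = \overline{N}$ yields that $\overline{N} \setminus \overline{M} = M$ is a module, contradicting the hypothesis on $M$ directly and dispensing with the case distinction on $N$. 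Your pivot argument is essentially an inlined proof of this identity specialized to the partition $M \sqcup N \sqcup I$; since Proposition~\ref{propo modules} as stated in the paper lists only restriction, lifting, and intersection, proving it by hand as you did is legitimate, merely longer.
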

To continue, we need the following notions and notations introduced in \cite{Index2}.
Let us say that two sets $E$ and $F$ {\it overlap} when $E \cap F \neq \varnothing$, $E \setminus F \neq \varnothing$ and $F \setminus E \neq \varnothing$. By minimality, the elements of $\text{mc}(T)$ are  pairwise incomparable by inclusion, i.e.,  
 \begin{equation} \label{eq overl disj}
 \text{for every } M \neq N \in \text{mc}(T), \text{ either } M \cap N = \varnothing \text{ or } M \text{ and } N \text{ overlap}.
 \end{equation}
 Let $M \in \text{mc}(T)$. We denote by $O_T(M)$ the set of the elements $N \in \text{mc}(T)$ that overlap $M$, i.e., $O_T(M) = \{N \in \text{mc}(T): N \ \text{overlaps } \ M\}$. We set $o_T(M) = |O_T(M)|$. A {\it transitive module} of $T$ is a module $M$ of $T$ such that the subtournament $T[M]$ is transitive. A {\it transitive component} of $T$ is a transitive module of $T$ which is maximal in the set of transitive modules of $T$ ordered by inclusion. A {\it twin} of the tournament $T$ is a module of cardinality 2 of $T$. 
 \begin{Notation} \normalfont
 Given a tournament $T$, the set of twins of $T$ is denoted by $\text{tw}(T)$.
 \end{Notation}
 
 \begin{lem} [\cite{Index2}] \label{trans partition}
 	Given a tournament $T$, the transitive components of $T$ form a partition of $V(T)$.
 \end{lem}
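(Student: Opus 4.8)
The plan is to establish two facts: that every vertex of $T$ lies in some transitive component, and that two transitive components which meet must coincide; together these say exactly that the transitive components partition $V(T)$. The covering property is immediate, since for each $x \in V(T)$ the singleton $\{x\}$ is a (trivial) module and $T[\{x\}]$ is transitive, so $\{x\}$ is a transitive module; as $V(T)$ is finite, $\{x\}$ is contained in a maximal transitive module, i.e.\ in a transitive component. The whole weight of the lemma therefore rests on disjointness, which I would reduce to the following key claim: if $M$ and $N$ are transitive modules of $T$ with $M \cap N \neq \varnothing$, then $M \cup N$ is again a transitive module. Granting this, whenever $M$ and $N$ are transitive components with $M \cap N \neq \varnothing$, the set $M \cup N$ is a transitive module containing each of them, so maximality of both forces $M = M \cup N = N$.

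To prove the key claim I first check that $M \cup N$ is a module. This is the standard fact that two intersecting modules have union a module: given $x, y \in M \cup N$ and $v \notin M \cup N$, if $x$ and $y$ lie in the same one of $M, N$ the module property of that set applies directly, while if $x \in M$ and $y \in N$ one interpolates through any fixed $w \in M \cap N$, using that $v$ lies outside both $M$ and $N$; thus the orientation between $v$ and $x$ agrees with that between $v$ and $w$, which agrees with that between $v$ and $y$.

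The substantive step is to show $T[M \cup N]$ is transitive, and here I would use that transitive tournaments are exactly those carrying a compatible linear order. Write $A = M \setminus N$, $B = M \cap N$, $C = N \setminus M$. Each $a \in A$ lies outside the module $N$, hence is oriented uniformly toward all of $N = B \cup C$; likewise each $c \in C$ lies outside $M$ and is oriented uniformly toward $M = A \cup B$. Splitting $A$ according to whether $a \to N$ or $N \to a$, and $C$ according to whether $c \to M$ or $M \to c$, a short check of the four combinations shows two of them are contradictory: an $a$ with $a \to N$ together with a $c$ with $c \to M$ would force both $a \to c$ and $c \to a$, and symmetrically on the other side. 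Since $T[M]$ and $T[N]$ are transitive, each carries a linear order and the two agree on $B$; these compatibility constraints then let me merge the two orders into a single linear order of $M \cup N$ realizing all arcs, whence $T[M \cup N]$ is transitive. I expect this merging to be the main obstacle, namely placing $A$ relative to $C$ so as to rule out every straddling $3$-cycle; the case analysis on the position of $C$ relative to $M$ — entirely after $M$, entirely before $M$, or (the incompatible case, which forces $A = \varnothing$ and hence $M \subseteq N$) both at once — is what makes it go through.
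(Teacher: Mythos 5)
Your proof is correct. Note first that the paper itself gives no proof of this lemma: it is imported from \cite{Index2}, so there is no in-paper argument to compare against; your argument is the standard (and, in substance, the expected) one, reducing the partition statement to the key claim that the union of two intersecting transitive modules is again a transitive module, with covering supplied by singletons and finiteness. The details you sketch do go through: the interpolation through $w \in M \cap N$ correctly shows $M \cup N$ is a module, and your two pairwise contradictions ($a \to N$ with $c \to M$, and $N \to a$ with $M \to c$) pin down the configuration exactly — if $C$ contains vertices of both types then $A = \varnothing$ and $M \subseteq N$, while if $C$ is uniform (say $M \to C$) then $A$ is forced uniform the other way ($A \to N$), and the concatenated order $A < B < C$ (each block carrying its induced order, which on $B$ is unambiguous since a transitive tournament admits a unique compatible linear order) realizes every arc, so $T[M \cup N]$ is transitive. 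Two places deserve one explicit line each in a final write-up: the degenerate cases $A = \varnothing$ or $C = \varnothing$ (where the union is just the larger set, trivially a transitive module), and the actual verification of the six arc classes in the merged order $A < B < C$ — both are routine, so the hedging in your last paragraph about the merge being "the main obstacle" is unwarranted: once the case analysis fixes the uniform types of $A$ and $C$, no straddling $3$-cycle can occur and nothing further needs to be ruled out.
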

 
 \begin{lem} [\cite{Index2}] \label{degre G_T}
Let $T$ be a tournament and let $M \in {\rm mc}(T)$. 
 We have $o_T(M) \leq 2$. Moreover, if $M \notin {\rm tw}(T)$, then $o_T(M) = 0$.
  \end{lem}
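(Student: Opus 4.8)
The plan is to understand exactly when two minimal co-modules can overlap, and to show that this can happen only between twins and in a very restricted pattern. Besides Proposition~\ref{propo modules}, I will use the classical fact that if two modules $A$ and $B$ of a tournament overlap, then $A\cap B$, $A\cup B$, $A\setminus B$ and $B\setminus A$ are again modules. The heart of the argument is the reduction: \emph{if $M,N\in{\rm mc}(T)$ overlap, then both $M$ and $N$ are twins}. To prove it I would set $P_1=M\cap N$, $P_2=M\setminus N$, $P_3=N\setminus M$ and $P_4=\overline{M\cup N}$, so that $P_1,P_2,P_3\neq\varnothing$. Since $M$ is a co-module, at least one of $M=P_1\cup P_2$, $\overline{M}=P_3\cup P_4$ is a nontrivial module, and likewise for $N=P_1\cup P_3$, $\overline{N}=P_2\cup P_4$; this gives four combinations. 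In the favourable combination, where $M$ and $N$ are themselves modules, $P_1=M\cap N$ and $P_2=M\setminus N$ are modules lying strictly inside the minimal co-module $M$, hence singletons, so $|M|=2$ and $M$ is a twin; symmetrically $N$ is a twin. In particular this reduction already yields the second assertion of the lemma: if $M\notin{\rm tw}(T)$, no $N$ can overlap $M$, i.e. $o_T(M)=0$.

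The remaining three combinations I would rule out (or reduce to the favourable one) by exhibiting a co-module strictly inside $M$ or inside $N$, contradicting minimality. For example, when $\overline{M}$ and $\overline{N}$ are the modules: if $P_4=\varnothing$ then the nontrivial module $\overline{M}=P_3$ sits strictly inside $N=P_1\cup P_3$, contradicting minimality of $N$; if $P_4\neq\varnothing$ then $\overline{M}$ and $\overline{N}$ overlap, so $\overline{M}\cup\overline{N}=\overline{M\cap N}$ is a module, which is nontrivial by a size count, whence $P_1=M\cap N$ is a co-module strictly inside $M$, contradicting minimality of $M$. The two mixed combinations (one of $M,\overline{M}$ and one of $N,\overline{N}$ being modules) are treated identically: a vanishing $P_4$ plants a nontrivial module strictly inside $M$ or $N$, while a nonempty $P_4$ either forces $M$ to be a twin as above or again produces a smaller co-module.

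It remains to bound the degree. Suppose $M=\{a,b\}$ is a twin with $o_T(M)\geq 3$. Each $N\in O_T(M)$ meets $M$ in exactly one vertex, so two of them, say $N_1$ and $N_2$, meet $M$ in the same vertex, say $a$. Being distinct and sharing $a$, they overlap by \eqref{eq overl disj}, hence are twins $N_1=\{a,c\}$ and $N_2=\{a,d\}$ with $b,c,d$ pairwise distinct. From the overlap of the twins $M$ and $N_1$, the set $\{a,b,c\}$ is a module and both $\{a,b\}$ and $\{a,c\}$ are modules of $T[\{a,b,c\}]$; but a $3$-vertex tournament carrying two distinct $2$-element modules is transitive with these modules sharing its middle vertex, so here $a$ is the middle vertex and $\{b,c\}$ is not a module of $T[\{a,b,c\}]$. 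On the other hand $N_2=\{a,d\}$ overlaps the module $\{a,b,c\}$, so $\{a,b,c\}\setminus N_2=\{b,c\}$ is a module of $T$, hence of $T[\{a,b,c\}]$ by Proposition~\ref{propo modules}, a contradiction. Therefore $o_T(M)\leq 2$.

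The main obstacle is the reduction in the first two paragraphs. The case where $M$ and $N$ are both modules is immediate from minimality, but the combinations involving the complements $\overline{M}$ and $\overline{N}$ are where the work lies: one must locate, in each case, the correct nontrivial module or co-module strictly contained in $M$ or $N$ that minimality forbids, and organize the four combinations uniformly. Once the overlap is pinned down to twins, the degree bound is the short combinatorial argument above.
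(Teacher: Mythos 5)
The paper itself offers no proof of this lemma: it is imported verbatim from \cite{Index2}, so any blind proof is automatically ``a different route'' from the document under review. Judged on its own, your argument is correct and self-contained, and it is a natural one: you combine the standard overlap calculus for modules (for overlapping modules $A,B$, the sets $A\cap B$, $A\cup B$, $A\setminus B$, $B\setminus A$ are modules) with minimality of co-modules to prove the key reduction that overlapping elements of ${\rm mc}(T)$ must be twins, and then you bound the degree by pigeonhole plus the observation that a $3$-vertex tournament with two distinct $2$-element modules is transitive with the modules sharing its middle vertex. I verified the favourable case ($M,N$ both modules: $P_1$ and $P_2$ are modules strictly inside $M$, hence singletons, so $M$ is a twin), the complement case (empty $P_4$ puts $\overline M=P_3$ strictly inside $N$; nonempty $P_4$ makes $\overline{P_1}=\overline M\cup\overline N$ a nontrivial module, so $P_1$ is a co-module strictly inside $M$), and the final $\{a,b,c\}$ argument (the twin $N_2=\{a,d\}$ overlaps the module $\{a,b,c\}$, so $\{b,c\}$ is a module of $T$, hence of $T[\{a,b,c\}]$ by Assertion~1 of Proposition~\ref{propo modules}, contradicting that $a$ is the middle vertex); all of these are sound, and together with (\ref{eq overl disj}) they deliver both assertions.

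One sentence does need tightening: in the mixed combinations you write that a nonempty $P_4$ ``either forces $M$ to be a twin as above or again produces a smaller co-module.'' As a disjunction this is too weak --- if only the first alternative occurred, you would learn nothing about $N$, and your reduction ``both $M$ and $N$ are twins'' (which your degree-bound argument uses for $N_1,N_2$) would not follow. What is actually true, with exactly the tools you already deploy, is that the mixed cases are impossible outright: if $M$ and $\overline N$ are the modules and $P_4\neq\varnothing$, then $M$ and $\overline N$ overlap, so $M\cup\overline N=\overline{P_3}$ is a module, nontrivial because $P_3\neq\varnothing$ and $|M|\geq 2$; hence $P_3$ is a co-module with $\varnothing\neq P_3\subsetneq N$, contradicting the minimality of $N$ (symmetrically, $P_2\subsetneq M$ kills the other mixed combination, and your own argument already kills the case where $\overline M,\overline N$ are the modules). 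With this repair, only the favourable case survives, so overlapping minimal co-modules are in fact both modules and both twins, and the rest of your proof goes through unchanged.
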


As a consequence of (\ref{eq overl disj}) and Lemma~\ref{degre G_T}, we obtain the following.

\begin{cor} \label{cor not twin disj}
Let $T$ be a tournament, and let $M \in {\rm mc}(T)$ such that $o_T(M) = 0$. We have 
\begin{equation} \label{eq cor}
M \cap N = \varnothing \text{ for every } N \in {\rm mc}(T) \setminus \{M\}.
\end{equation} 
In particular, {\rm (\ref{eq cor})} holds when $M \in {\rm mc}(T) \setminus {\rm tw}(T)$.
\end{cor}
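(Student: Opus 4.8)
The plan is to read this off directly from the two cited ingredients, since the dichotomy (\ref{eq overl disj}) and Lemma~\ref{degre G_T} already encode everything that is needed.

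First I would translate the hypothesis $o_T(M) = 0$ into its literal meaning: since $o_T(M) = |O_T(M)|$, the assumption says precisely that $O_T(M) = \varnothing$, i.e., no element of ${\rm mc}(T)$ distinct from $M$ overlaps $M$. Then, fixing an arbitrary $N \in {\rm mc}(T) \setminus \{M\}$, I would invoke (\ref{eq overl disj}): as $M$ and $N$ are two distinct minimal co-modules, they are either disjoint or overlapping. The overlapping alternative would put $N \in O_T(M)$, contradicting $O_T(M) = \varnothing$; hence $M \cap N = \varnothing$. Since $N \in {\rm mc}(T) \setminus \{M\}$ was arbitrary, this yields (\ref{eq cor}).

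For the ``in particular'' clause, I would simply note that if $M \in {\rm mc}(T) \setminus {\rm tw}(T)$, then $M \notin {\rm tw}(T)$, so the second part of Lemma~\ref{degre G_T} forces $o_T(M) = 0$; this reduces the claim to the situation already handled above.

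I do not anticipate a genuine obstacle here: the statement is a one-step logical consequence of the two earlier results, and the only point requiring any care is unwinding the definition of $o_T(M) = 0$ as the emptiness of $O_T(M)$, which is what converts the overlap dichotomy (\ref{eq overl disj}) into disjointness.
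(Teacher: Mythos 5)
Your proposal is correct and follows exactly the route the paper intends: the corollary is stated there as an immediate consequence of (\ref{eq overl disj}) and Lemma~\ref{degre G_T}, and your argument---unwinding $o_T(M)=0$ as $O_T(M)=\varnothing$, applying the overlap-or-disjoint dichotomy, and using the second part of Lemma~\ref{degre G_T} for the ``in particular'' clause---is precisely that deduction, carried out with appropriate care.
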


\begin{Notation} \normalfont \label{notat C(k)}
Let $T$ be a tournament with at least three vertices. Suppose that $T$ admits a transitive component $C$ such that $|C|=n \geq 2$. The elements of $C$ can be indexed as $v_0, \ldots, v_{n-1}$ in such a way that $T[C] = (C, \{(v_i,v_j): 0 \leq i < j \leq n-1\})$. For every $k \in \{0, \ldots, n-2\}$, the pair $\{v_{k}, v_{k+1}\}$ is a twin of $T[C]$  and thus of $T$ (see Assertion~2 of Proposition~\ref{propo modules}). The unique element of ${\rm mc}(T)$ that is contained in $\{v_{k}, v_{k+1}\}$  is denoted by $C(k)$. When $n \geq 3$, for every $k \in \{0, \ldots, n-2\}$, we have (e.g., see \cite{Index2})
\begin{equation*}
C(k) = \
\begin{cases}
\{v_0\} \ \text{or} \ \{v_0, v_1\} \ \ \ \ \ \ \ \ \ \ \ \ \  \text{if} \ \  k=0,\\

\{v_{n-1}\} \ \text{or} \ \{v_{n-2},v_{n-1}\} \  \ \ \ \  \text{if} \ k= n-2,\\        

\{v_k, v_{k+1}\} \ \ \ \ \ \ \ \ \ \ \ \ \ \ \ \ \ \ \ \ \hspace{0.09cm} \text{otherwise.}
\end{cases}
\end{equation*}
\end{Notation}

\begin{lem} [\cite{Index2}] \label{lem C mnc}
	Let $T$ be a tournament of order at least $3$. Suppose that $T$ admits a transitive component $C$ such that $|C|=n \geq 2$. Given $M \subseteq V(T)$, the following assertions are equivalent.
	\begin{enumerate}
		\item $M \in {\rm mc}(T)$ and $M \cap C \neq \varnothing$.
		\item $M \in \{C(0), \ldots, C(n-2)\}$.
	\end{enumerate}
\end{lem}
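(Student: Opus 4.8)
The proof naturally splits along the two implications. The implication (2)$\Rightarrow$(1) I would dispatch immediately: by the construction recalled in Notation~\ref{notat C(k)}, each $C(k)$ lies in $\mathrm{mc}(T)$ and satisfies $\varnothing\neq C(k)\subseteq\{v_k,v_{k+1}\}\subseteq C$, whence $C(k)\cap C\neq\varnothing$. All the content is in (1)$\Rightarrow$(2), so fix $M\in\mathrm{mc}(T)$ with $M\cap C\neq\varnothing$; I must show $M$ is one of the $C(k)$. Since $M$ is a co-module, either $M$ is a nontrivial module (Case A) or $\overline{M}$ is a nontrivial module while $M$ is not (Case B). The tools I would lean on are: $T[C]$ is transitive, so every module of $T$ meets $C$ in an interval of the order $v_0<\cdots<v_{n-1}$ (Proposition~\ref{propo modules}(1)); intersections of modules are modules and modules of $T[M]$ are modules of $T$ (Proposition~\ref{propo modules}(3),(2)); and each twin $\{v_k,v_{k+1}\}$ contains the unique minimal co-module $C(k)$.

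In Case A I would first observe that $M\cap C$ is a module of $T$. If $|M\cap C|\ge 2$ it is a nontrivial module, hence a co-module contained in $M$, so minimality gives $M=M\cap C\subseteq C$; then $M$ is an interval of size $\ge2$ of the transitive $T[C]$, and if $|M|\ge 3$ the twin formed by its two smallest vertices would be a strictly smaller co-module, a contradiction, so $M$ is a twin $\{v_i,v_{i+1}\}$ and equals $C(i)$ by uniqueness. The delicate situation is $M\cap C=\{v_i\}$. Choosing $v_j\in C\setminus\{v_i\}$ and combining the uniformity of vertices outside $C$ toward $C$ with the uniformity of $v_j$ (which lies outside $M$) toward $M$, I would show that $v_i$ relates uniformly to every $w\in M\setminus\{v_i\}=M\setminus C$. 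If $|M|\ge 3$ this makes $M\setminus\{v_i\}$ a nontrivial module of $T$ inside $M$, contradicting minimality; if $|M|=2$, say $M=\{v_i,w\}$ with $w\notin C$, the same uniformities show that $C\cup\{w\}$ is a module in which $w$ is a clone of $v_i$, hence transitive, contradicting the maximality of the transitive component $C$ (Lemma~\ref{trans partition}). Thus Case A forces $M=C(i)$.

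In Case B, $\overline{M}\cap C$ is an interval, so $M\cap C$ is a prefix together with a suffix of the order on $C$. If either part had at least two vertices, or if $C\subseteq M$, then the twin $\{v_0,v_1\}$ or $\{v_{n-2},v_{n-1}\}$ would be a co-module strictly inside $M$ (using that $M\neq\{v_0,v_1\}$ and $M\neq\{v_{n-2},v_{n-1}\}$, as $M$ is not a nontrivial module), contradicting minimality; hence $M\cap C\subseteq\{v_0,v_{n-1}\}$. Assume $v_0\in M$ (the case $v_{n-1}\in M$ being symmetric). Since $v_1\in\overline{M}$, a short uniformity computation shows that for every $x\in M\setminus\{v_0\}$ one has $x\to v_0\Leftrightarrow x\to v_1\Leftrightarrow x\to\overline{M}$, so that $\overline{M}\cup\{v_0\}$ is a module. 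Were $|M|\ge2$, its complement $M\setminus\{v_0\}$ would be a co-module strictly inside $M$, again contradicting minimality. Hence $M=\{v_0\}=C(0)$, and symmetrically $M=C(n-2)$ when $v_{n-1}\in M$.

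I expect the main obstacle to be precisely the configurations where $M$ is not already trapped inside $C$: the single-vertex intersection in Case A and all of Case B. In each I cannot argue by restriction alone; instead the contradiction must be manufactured either by enlarging a module past the maximal transitive component (the set $C\cup\{w\}$) or by peeling one vertex off a complementary module (the set $\overline{M}\cup\{v_0\}$) to defeat minimality. The care goes into the uniformity bookkeeping that certifies these sets are modules, and into checking that the extreme cases $n=2$ and $C=V(T)$ do not slip through; by contrast the twin arguments of Case A and the direction (2)$\Rightarrow$(1) are routine.
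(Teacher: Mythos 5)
Your proof is correct, but note that there is nothing in this paper to compare it against: Lemma~\ref{lem C mnc} is imported from \cite{Index2} with a citation and no proof, so your argument stands as a self-contained derivation from the tools the paper does state (Proposition~\ref{propo modules}, Notation~\ref{notat C(k)}, and the definition of transitive component). Judged on its own merits, the case analysis is sound. The direction (2)$\Rightarrow$(1) is indeed immediate from Notation~\ref{notat C(k)}. In Case~A, the subcase $|M\cap C|\geq 2$ correctly collapses $M$ to a twin via minimality, and your singleton subcase works: for $w\in M\setminus\{v_i\}$ and $v_j\in C\setminus\{v_i\}$ one gets $T(w,v_i)=T(v_i,v_j)$ (via $T(v_j,w)=T(v_j,v_i)$ and $T(w,v_i)=T(w,v_j)$), so $v_i$ is uniform toward $M\setminus\{v_i\}$, giving the module $M\setminus\{v_i\}$ when $|M|\geq 3$; and when $|M|=2$, the transitivity of $T[C\cup\{w\}]$ actually needs only that $w$ is uniform toward $C$ (automatic since $C$ is a module and $w\notin C$), while the module property of $C\cup\{w\}$ is where $M$ being a module enters. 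In Case~B, your uniformity chain does survive the one delicate configuration: if $v_{n-1}\in M$ as well, then for $x=v_{n-1}$ both $v_0$ and $v_1$ dominate $x$ (using $n\geq 3$; when $n=2$ this $x$ cannot occur since $C\subseteq M$ was already excluded), so $\overline{M}\cup\{v_0\}$ is a module and minimality forces $M=\{v_0\}$ --- which also implicitly rules out $M\cap C=\{v_0,v_{n-1}\}$, a point worth one explicit sentence. Two cosmetic corrections: the maximality of $C$ used to contradict $T[C\cup\{w\}]$ being a transitive module is part of the \emph{definition} of transitive component, not of Lemma~\ref{trans partition} (which gives the partition), so that citation is slightly off; and in Case~B the exclusion $M\neq\{v_0,v_1\}$ is justified exactly as you say, because in that case $M$ would be a nontrivial module, contrary to the Case~B hypothesis.
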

\begin{Notation} \normalfont
Given a tournament $T$, the set of nontrivial modules of $T$ is denoted by $\mathcal{M}(T)$.
\end{Notation}
 \section{Bipartitions of $\mathcal{M}(T)$ by transversals of $\text{mc}(T)$}
We need some terminology from hypergraphs (e.g., see \cite{CBerge}) that we introduce in terms of families of sets.
We only have to consider finite families of finite sets.
Let $\mathcal{ F}$ be such a family. 
A {\it stable} set of $\mathcal{ F}$ is any (finite) set that contains no element of $\mathcal{ F}$.
The family $\mathcal{ F}$ is {\it bipartite} (or has {\it Bernstein property} \cite{EFHM}) if there exists a set $S$ such that both $S$ and $(\cup \mathcal{F}) \setminus S$ are stable sets of $\mathcal{ F}$. 
When we say that $\mathcal{ F}$ is bipartite by $S$,  we mean that $S$ and $(\cup \mathcal{F}) \setminus S$ are stable sets of $\mathcal{ F}$.
Erd\H{o}s et al. \cite{EFHM} obtained the following theorem. 
\begin{thm}[\cite{EFHM}] \label{thm erdos modules}
	For any tournament $T$, $\mathcal{M}(T)$ is bipartite.
\end{thm}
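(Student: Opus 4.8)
The plan is to recast the statement as a hypergraph two-colouring problem and to reduce it to the \emph{minimal} nontrivial modules. Viewing a bipartition by $S$ as a colouring of $\cup\,\mathcal{M}(T)$ with two colours (namely $S$ and its complement), the requirement that both $S$ and $(\cup\mathcal{M}(T))\setminus S$ be stable means precisely that no element of $\mathcal{M}(T)$ is monochromatic. Since $\mathcal{M}(T)$ is finite, every nontrivial module contains a minimal element of $\mathcal{M}(T)$, and a module that contains a non-monochromatic subset is itself non-monochromatic. Hence it suffices to produce a two-colouring of $V(T)$ under which every minimal element of $\mathcal{M}(T)$ receives both colours.

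The first real step is to describe the minimal elements of $\mathcal{M}(T)$. If $M$ is such an element, then $T[M]$ has no nontrivial proper module: any module $N$ with $2 \le |N| < |M|$ would, by Assertion~2 of Proposition~\ref{propo modules}, be a nontrivial module of $T$ strictly contained in $M$, contradicting minimality. Thus $T[M]$ is indecomposable, so either $|M|=2$ (a twin) or $T[M]$ is indecomposable of order $\ge 3$ and in particular not transitive. Next I would analyse how two distinct minimal nontrivial modules $M \ne N$ can meet. Their intersection is a module (Assertion~3 of Proposition~\ref{propo modules}) and is properly contained in each, so by minimality it is trivial; moreover, using the standard fact that overlapping modules have their set-difference as a module, an overlap $M\cap N=\{x\}$ would force $M\setminus N$ to be a module of size $|M|-1$, which by minimality is trivial, whence $|M|=|N|=2$. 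Consequently the minimal modules of size $\ge 3$ are pairwise disjoint and disjoint from every twin, while twins may overlap only with twins, and only in a single vertex.

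It remains to pin down the twin graph, whose vertices are the elements of $\cup\,\text{tw}(T)$ and whose edges are the twins. Since a twin induces a (transitive) module, it lies inside a single transitive component; by Lemma~\ref{trans partition} these components partition $V(T)$, and inside a transitive component $C=\{v_0,\dots,v_{n-1}\}$ the twins are exactly the consecutive pairs $\{v_k,v_{k+1}\}$ (Notation~\ref{notat C(k)} together with Proposition~\ref{propo modules}). Hence the twin graph is a vertex-disjoint union of paths, and in particular bipartite. Now I would build the colouring: colour each transitive component by alternating the two colours along the path $v_0,\dots,v_{n-1}$, so that every twin is bichromatic; colour the vertices of each minimal module of size $\ge 3$ with both colours, which is possible and unconstrained since these sets are mutually disjoint and disjoint from all twins; and colour the remaining vertices arbitrarily. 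Every minimal nontrivial module then receives both colours, which by the reduction of the first paragraph completes the proof. The hard part is the intersection analysis of the second paragraph, namely showing that the size-$\ge 3$ minimal modules are isolated and that the twins assemble into disjoint paths; everything afterwards is a direct colouring, and the only facts used beyond Proposition~\ref{propo modules} and Lemma~\ref{trans partition} are the well-known closure properties of overlapping modules.
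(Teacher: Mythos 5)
Your argument is correct, and it is worth noting that the paper never actually proves Theorem~\ref{thm erdos modules} itself: it imports it from \cite{EFHM} and instead proves the strengthening, Theorem~\ref{thhypergraphe}, so the natural comparison is with that proof. The two arguments share a skeleton --- reduce to minimal objects, show the non-twin minimal ones are pairwise disjoint, and exploit the path structure of twins inside transitive components via an alternating choice --- but you work with minimal nontrivial \emph{modules}, proving the disjointness and the ``overlap forces twins'' facts from scratch with the standard overlap calculus (if $M$ and $N$ overlap, then $M \setminus N$ is a module), whereas the paper works with minimal \emph{co-modules} ${\rm mc}(T)$ and quotes Lemma~\ref{degre G_T} and Corollary~\ref{cor not twin disj} from \cite{Index2}. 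The difference is not cosmetic: because the paper wants a set $R$ that is an exact minimum transversal of ${\rm mc}(T)$ and that \emph{strictly} bipartitions $\mathcal{M}(T)$ (so also $M \not\subseteq R$), it must hit singleton co-modules such as $\{v_0\}$, which forces the case distinction in (\ref{eq C'}), the hypothesis that $T$ is not transitive of even order (via Observation~\ref{obs MNN'}), and the K\"{o}nig-property bookkeeping. Your weaker target --- plain bipartiteness --- lets you drop all of that: no minimality, no exactness, no parity hypothesis, and consequently your proof covers \emph{all} tournaments, including transitive ones of even order, which is exactly the generality in which the cited theorem is stated (and where strict bipartiteness fails, as Section~5 observes). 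One step you should make explicit: when you colour a size-$\ge 3$ minimal module ``unconstrained'', you are implicitly using that such a module, being disjoint from every twin, is disjoint from every transitive component of size at least $2$ (each vertex of such a component lies in a consecutive-pair twin), so its colouring cannot clash with the alternating colouring already imposed on those components; this is immediate, but it is the one place where your two colouring rules could have interacted.
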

We need a strengthened version of Theorem~\ref{thm erdos modules} (see Theorem~\ref{thhypergraphe} below). In that order, we continue with the following notions about finite families of finite sets. To begin, we strengthen the notion of bipartite family to that of strictly bipartite family as follows. The family $\mathcal{F}$ is {\it strictly bipartite} if there exists a (finite) set $S$ that overlaps every element of $\mathcal{F}$. In this instance, we say that $\mathcal{F}$ is strictly bipartite by $S$. Clearly, if the family $\mathcal{F}$ is strictly bipartite by $S$, then it is bipartite by the same $S$. But a bipartite family is not necessarily strictly bipartite.    
A {\it transversal} of the family $\mathcal{F}$ is any (finite) set $R$ that intersects each element of $\mathcal{F}$, i.e., such that $F \cap R \neq \varnothing$ for every $F \in \mathcal{ F}$. Observe that a set $R$ is a transversal of the family $\mathcal{ F}$ if and only if $(\cup \mathcal{F}) \setminus R$ is a stable set of $\mathcal{F}$. A {\it minimum transversal} of $\mathcal{F}$ is a transversal of $\mathcal{F}$ which is of minimum size. Observe that a minimum transversal of $\mathcal{ F}$ is always contained in $\cup \mathcal{ F}$. The {\it transversal number} of $\mathcal{F}$, denoted by $\tau(\mathcal{F})$, is the size of a minimum transversal of $\mathcal{F}$. An  {\it exact transversal}  of $\mathcal{ F}$ is a transversal $R$ of $\mathcal{F}$ such that $\vert F \cap R\vert =1$ for every $F \in \mathcal{F}$. A {\it matching} of $\mathcal{F}$ is a subset of pairwise disjoint elements of $\mathcal{F}$. The {\it matching number} of $\mathcal{F}$, denoted by $\nu(\mathcal{F})$, is the size of a {\it maximum matching} of $\mathcal{F}$, i.e., a matching of $\mathcal{F}$ which is of maximum size. For example, in \cite{Index2}, a maximum matching of ${\rm mc}(T)$ is called a {\it $\delta$-decomposition} of $T$, where $T$ is a tournament. It is easy to see that
\begin{equation} \label{eq nu = Delta}
\text{for every tournament } T, \text{ we have } \Delta(T) = \nu(\text{mc}(T)).
\end{equation}
 Clearly, the transversal and matching numbers of the family $\mathcal{F}$ satisfy
\begin{equation} \label{eq nu tau}
\nu(\mathcal{F})\leq \tau(\mathcal{F}).
\end{equation}
When equality holds in (\ref{eq nu tau}), i.e. $\nu(\mathcal{F}) = \tau(\mathcal{F})$, we say that $\mathcal{F}$ has the {\it K\"{o}nig property}.
\begin{thm}\label{thhypergraphe}
 For any tournament $T$ that is not transitive of even order, $\mathcal{M}(T)$ is strictly bipartite by an exact and minimum transversal of ${\rm mc}(T)$. Moreover, ${\rm mc}(T)$ has the K\"{o}nig property.
\end{thm}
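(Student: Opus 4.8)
The plan is to construct a single set $R \subseteq V(T)$ that witnesses all the assertions at once: an exact minimum transversal of ${\rm mc}(T)$ that moreover overlaps every nontrivial module, with the König property read off from its size. The construction is carried out blockwise along the transitive decomposition of Lemma~\ref{trans partition}.

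First I would record the global shape of ${\rm mc}(T)$. By (\ref{eq overl disj}) two distinct minimal co-modules are either disjoint or overlap, and by Lemma~\ref{degre G_T} each overlaps at most two others and only twins can overlap; since a twin is a transitive module, Lemma~\ref{lem C mnc} and Notation~\ref{notat C(k)} show that the minimal co-modules meeting a transitive component $C$ of order $n \ge 2$ are exactly the intervals $C(0),\dots,C(n-2)$ of the transitive order $v_0,\dots,v_{n-1}$ on $C$, each of size $1$ or $2$. By Corollary~\ref{cor not twin disj} every remaining minimal co-module is disjoint from all others, and since the $C(k)$ cover $C$ it is disjoint from every such component. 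Hence the supports of the blocks — one block per transitive component of order $\ge 2$, plus one isolated block per remaining minimal co-module — are pairwise disjoint, so both $\tau$ and $\nu$ are additive over blocks.

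Second I would build $R$ block by block. On an isolated block I pick a single vertex. On a component block $C$ the point is that $C(0)\in\{\{v_0\},\{v_0,v_1\}\}$ and $C(n-2)\in\{\{v_{n-1}\},\{v_{n-2},v_{n-1}\}\}$, a singleton end forcing the extreme vertex into every transversal; by the first assertion of Lemma~\ref{comod part} there are at most two singleton ends in total. The crucial structural fact is that $C$ can have singleton ends at both $v_0$ and $v_{n-1}$ only when $C=V(T)$ and $T$ is transitive, since $C$ is a module and a globally dominating source cannot relate to an outside vertex the same way as a globally dominated sink. Consequently, \emph{precisely because $T$ is not transitive of even order}, in each block I may take $R_C$ to be one full index-parity class of $C$, chosen so as to contain every singleton end of $C$. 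Each size-$2$ interval $C(k)$ meets a parity class in exactly one vertex and each singleton end is captured, so $R_C$ is an exact transversal of $\{C(0),\dots,C(n-2)\}$ which moreover contains no two consecutive vertices $v_j,v_{j+1}$. Taking $R$ to be the union of the $R_C$ and the chosen isolated vertices gives an exact transversal of ${\rm mc}(T)$; exhibiting in each block a matching of the same size as $R_C$ then gives, via (\ref{eq nu tau}) and $\nu \le \tau \le |R|$, the chain $|R|=\tau({\rm mc}(T))=\nu({\rm mc}(T))=\Delta(T)$ using (\ref{eq nu = Delta}), so $R$ is minimum and the König property holds simultaneously.

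Third, and this is the main obstacle, I would show that $R$ overlaps every nontrivial module $M$. That $R\cap M\neq\varnothing$ and $R\setminus M\neq\varnothing$ is automatic, since both $M$ and $\overline M$ are co-modules and hence each contains a minimal co-module met by the transversal $R$. The delicate condition is $M\setminus R\neq\varnothing$. Suppose not and take $M$ minimal among nontrivial modules contained in $R$; then $T[M]$ is indecomposable by the second assertion of Proposition~\ref{propo modules}. The minimal co-module inside $M$ lies in $R$, so exactness forces it to be a singleton $\{v\}$, whence $\overline{\{v\}}$ is a module and $v$ globally dominates or is dominated; but an indecomposable tournament of order $\ge 3$ admits no such vertex, so $|M|=2$. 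Thus $M$ is a twin in a single component $C$ with $v$ its source or sink, forcing $M=\{v_0,v_1\}$ or $M=\{v_{n-2},v_{n-1}\}$ — two consecutive vertices, contradicting that $R\cap C=R_C$ has no two consecutive vertices. I expect this final step, together with the verification that the even-transitive case is the unique obstruction (it is exactly the configuration forcing two consecutive same-block vertices into $R$), to be the technical heart of the proof, the structural and counting steps being routine consequences of the cited lemmas.
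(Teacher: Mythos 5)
Your proposal is correct, and its core construction coincides with the paper's: the paper builds exactly your set $R$, taking one vertex from each non-twin minimal co-module (pairwise disjoint by Corollary~\ref{cor not twin disj}) together with, in each transitive component $C$, the parity class $r(C)$ of (\ref{eq C'}) chosen to contain the singleton ends, and it justifies well-definedness by the same both-ends-singleton argument you sketch (Observation~\ref{obs MNN'} applied to the module $C$, with non-even-transitivity forcing $|C|$ odd so the two classes coincide); your treatment of $M\subseteq R$ likewise ends as the paper's does, with a twin $\{v_0,v_1\}$ whose missing vertex $v_1$ lies off the parity class. Where you genuinely diverge is the minimality/K\"{o}nig step: the paper fixes an \emph{arbitrary} maximum matching ${\rm mc}_2(T)$ and proves $R\subseteq \cup\,{\rm mc}_2(T)$ via a delicate parity-and-counting analysis inside each component (the $K_0, D_0, D_1$ argument around (\ref{eq K0})), whereas you directly exhibit a blockwise matching of size $|R_C|$ and conclude from $\nu\leq\tau\leq|R|$ and (\ref{eq nu = Delta}) that $|R|=\tau({\rm mc}(T))=\nu({\rm mc}(T))=\Delta(T)$; your route is more elementary and shorter, at the modest cost of writing out the explicit matchings (a singleton end plus the twins $C(1),C(3),\dots$, or $C(0),C(2),\dots$ when no end is a singleton). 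Your endgame for $M\subseteq R$ also varies mildly: you take $M$ minimal so that $T[M]$ is indecomposable and rule out $|M|\geq 3$ via the source/sink vertex, while the paper intersects $M$ with $\overline{I}$ and reuses Observation~\ref{obs MNN'}; both are sound. One point you should make explicit: when a component has no singleton end, ``the parity class containing every singleton end'' does not determine the class, and the wrong choice (the even-index class, of size $\lceil n/2\rceil$ for odd $n$) strictly exceeds the block's matching number $\lfloor n/2\rfloor$, breaking minimality; you must take the odd-index class there, exactly as in the third clause of (\ref{eq C'}). Your stipulation that each block carry a matching of size $|R_C|$ implicitly forces this correct choice, so this is an underspecification in the write-up rather than a gap in the argument.
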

\begin{proof}
	Let $T$ be a tournament. If $T$ is indecomposable, then the theorem trivially holds because $\mathcal{M}(T) = \text{mc}(T) = \varnothing$. Hence suppose that $T$ is decomposable, and in particular $v(T) \geq 3$. Further suppose that $T$ is not transitive of even order.
	We will construct a transversal $R$ of $\text{mc}(T)$ that is minimum and exact. For this purpose, we have to divide $\text{mc}(T)$ into the disjoint union of $\text{mc}_0(T) = \text{mc}(T) \setminus \text{tw}(T)$ and $\text{mc}_1(T) = \text{mc}(T) \cap \text{tw}(T)$. 
	
	Let $f$ be a choice function on $\text{mc}_0(T)$.
Since the elements of $\text{mc}_0(T)$ are pairwise disjoint by Corollary~\ref{cor not twin disj}, then $f(\text{mc}_0(T))$ is an exact and minimum transversal of $\text{mc}_0(T)$. Set $R_0 = f(\text{mc}_0(T))$. 

We now consider the set $\textbf{C}(T)$ of the transitive components $C$ of $T$ such that $|C| \geq 2$. 
With every $C \in \textbf{C}(T)$, we associate the subset $r(C)$ of $C$ defined as follows. 
\begin{equation} \label{eq C'}
 r(C)=\begin{cases}\{v_{i} \in C: i \ \text{is  even}\} \ \ \ \ \ \ \ \ \ \ \ \ \ \ \text{ if } \{v_{0}\}\in \text{mc}(T),\\
\{v_{\vert C\vert -i} \in C: i \ \text{is  odd}\} \ \ \  \ \ \ \ \ \ \ \ \text{ if } \{v_{\vert C\vert -1}\}\in \text{mc}(T),\\
\{v_{i} \in C: i \ \text{is  odd}\} \ \ \ \ \ \ \  \ \ \ \  \ \ \ \ \text{ otherwise,}
\end{cases}
\end{equation}
where $v_0, \ldots, v_{|C|-1}$ are the elements of $C$ indexed in such a way that $T[C] = (C, \{(v_i,v_j): 0 \leq i < j \leq \vert C\vert-1\})$.
To verify that $r(C)$ is well-defined, we have to show that if $\{v_{0}\}\in \text{mc}(T)$ and $\{v_{|C|-1}\}\in \text{mc}(T)$, then $\{v_{i} \in C: i \ \text{is  even}\} = \{v_{\vert C\vert -i} \in C: i \ \text{is  odd}\}$. 
The following observation follows from the definitions of module and co-module.
\begin{Observation} \label{obs MNN'}
Given a module $M$ of a tournament $U$, if there exist $I \neq J \in \rm{mc}(U)$ such that $|I| = |J|=1$ and $I \cup J \subseteq M$, then $M = V(U)$.
\end{Observation}
 Suppose that $\{v_{0}\}\in \text{mc}(T)$ and $\{v_{|C|-1}\}\in \text{mc}(T)$. Since $C$ is a module of $T$, it follows from Observation~\ref{obs MNN'} that $C = V(T)$, i.e., $T$ is transitive. Since $T$ is not transitive of even order, it follows that $|C|$ is odd. Therefore $\{v_{i} \in C: i \ \text{is  even}\} = \{v_{\vert C\vert -i} \in C: i \ \text{is  odd}\}$. Thus $r(C)$ is well-defined.
 
Set $R_1=\cup \{r(C): C \in \textbf{C}(T)\}$ and $R=R_0 \cup R_1$.
We have to prove that $R$ is an exact and minimum transversal  of $\text{mc}(T)$. 
 
 To begin, we prove that $R$ is an exact transversal of $\text{mc}(T)$. 
 Let $M \in \text{mc}(T)$. First, suppose $M \in \text{mc}_0(T)$. Since $R_0$ is an exact transversal of $\text{mc}_0(T)$, we have $|M \cap R_0| =1$. If $M \cap R_1 = \varnothing$, then $|M \cap R| =1$ as desired. Hence suppose $M \cap R_1 \neq \varnothing$.
 By the definition of $R_1$, there exists $C \in \textbf{C}(T)$ such that $M \cap r(C) \neq \varnothing$ and thus $M \cap C \neq \varnothing$. By Lemma~\ref{lem C mnc}, we have $M = C(k)$ for some $k \in \{0, \ldots, |C|-2\}$. But $C(k) \in \text{tw}(T)$ or $|C(k)|=1$ (see Notation~\ref{notat C(k)}). Since $M \notin \text{tw}(T)$, we obtain $|M| =1$. Thus, $|M \cap R|=1$ as required.  
 Second, suppose $M \in \text{mc}_1(T)$. We will prove that $M \cap R_0 = \varnothing$ and $|M \cap R_1| =1$, which implies $|M \cap R| =1$. Let $C$ be the transitive component of $T$ containing $M$. Since $M \in {\rm tw}(T)$, by the construction of $r(C)$, we have $|M \cap r(C)|=1$. It follows from Lemma~\ref{trans partition} that $|M \cap R_1| =1$. Now suppose to the contrary that $M \cap R_0 \neq \varnothing$. By the definition of $R_0$, there exists $N \in \text{mc}_0(T)$ such that $M \cap N \neq \varnothing$. Since $o_T(N) = 0$ by Lemma~\ref{degre G_T}, $M =N$ by Corollary~\ref{cor not twin disj}, which is not possible because $M \in \text{tw}(T)$ and $N \notin \text{tw}(T)$. Thus $M \cap R_0 = \varnothing$. We conclude that $R$ is an exact transversal of $\text{mc}(T)$.   

We now prove that $\text{mc}(T)$ has the K\"{o}nig property, and that its exact transversal $R$ is minimum. Let $\text{mc}_2(T)$ be a maximum matching of $\text{mc}(T)$. Since $R$ is an exact transversal of ${\rm mc}(T)$ and ${\rm mc}_2(T) \subseteq {\rm mc}(T)$, $R$ is an exact transversal of ${\rm mc}_2(T)$ as well. Since the elements of ${\rm mc}_2(T)$ are pairwise disjoint, to prove that the exact transversal $R$ of ${\rm mc}_2(T)$ is also minimum, it suffices to prove that $R \subseteq \cup \text{mc}_2(T)$. By Corollary~\ref{cor not twin disj}, $M \cap N = \varnothing$ for every $M \in \text{mc}_0(T)$ and $N \in\text{mc}(T) \setminus \{M\}$. Hence, it follows from the maximality of $\text{mc}_2(T)$ that $\text{mc}_0(T) \subseteq \text{mc}_2(T)$. Since $R_0 \subseteq \cup \text{mc}_0(T)$, we obtain $R_0 \subseteq \cup \text{mc}_2(T)$. Now let $C \in \textbf{C}(T)$. We may assume $T[C] = \underline{n}$ for some integer $n \geq 2$. We have to prove that $r(C) \subseteq \cup \text{mc}_2(T)$, which implies $R_1 \subseteq \cup \text{mc}_2(T)$ and thus $R \subseteq \cup \text{mc}_2(T)$, as desired. Suppose to the contrary that $r(C) \setminus \cup \text{mc}_2(T) \neq \varnothing$. Let $k \in r(C) \setminus \cup \text{mc}_2(T)$. It is not difficult to verify that in this case, we have $n \geq 3$. Set $K_0 = \{i \in C : i < k\}$, $K_1 = \{i \in C : i > k\}$, $D_0 = \{C(0), \ldots, C(n-2)\}$, and $D_1 = D_0 \cap \text{mc}_2(T)$. By Lemma~\ref{lem C mnc} and the maximality of $\text{mc}_2(T)$, $D_1$ is a maximum matching of $D_0$. Moreover, since $k \notin \cup D_1$, $D_1$ is the disjoint union of  $ D_1 \cap 2^{K_0}$ and $D_1 \cap 2^{K_1}$ (see Notation~\ref{notat C(k)}). Hence by the maximality of $D_1$, 
\begin{equation} \label{eq K0}
D_1 \cap 2^{K_0}  \text{ is a maximum matching of } D_0 \cap 2^{K_0 \cup \{k\}}.
\end{equation}
 We now distinguish the following two cases. Suppose $\{\{0\}, \{n-1\}\} \cap \text{mc}(T) = \varnothing$. In this instance, we have $D_0= \{\{i,i+1\} : 0 \leq i \leq n-2\}$ (see Notation~\ref{notat C(k)}) and $r(C) = \{i \in C : i \text{ is odd}\}$ (see (\ref{eq C'})). Thus $k$ is odd because $k \in r(C)$. Since the elements of $D_1$ are disjoint and of size $2$, and since $|K_0| =k$, it follows that $|D_1 \cap 2^{K_0}| \leq \frac{k-1}{2}$. On the other hand, $\{\{i,i+1\} \in 2^{K_0 \cup \{k\}} : i \text{ is even}\}$ is a matching of size $\frac{k+1}{2}$ of $D_0 \cap 2^{K_0 \cup \{k\}}$. This contradicts (\ref{eq K0}). Now suppose $\{\{0\}, \{n-1\}\} \cap \text{mc}(T) \neq \varnothing$. By interchanging $T$ and $T^{\star}$, as well as $i$ and $n-1-i$ for $i \in C$, we may assume $\{0\} \in \text{mc}(T)$. 
Thus $r(C) = \{i \in C : i \text{ is even}\}$ (see (\ref{eq C'})), and hence $k$ is even. Since the elements of $(D_1 \cap 2^{K_0}) \setminus \{\{0\}\}$ are disjoint and of size $2$ (see Notation~\ref{notat C(k)}), it follows that $|D_1 \cap 2^{K_0}| \leq \frac{k}{2}$. On the other hand, if $\{k\} \in \text{mc}(T)$, then since $k \notin \cup{\rm mc}_2(T)$, ${\rm mc}_2(T) \cup \{\{k\}\}$ is a matching of ${\rm mc}(T)$, which contradicts the maximality of ${\rm mc}_2(T)$. Thus $\{k\} \notin \text{mc}(T)$. Therefore, $\{\{0\}\} \cup (\{\{i,i+1\} \in 2^{K_0 \cup \{k\}} : i \text{ is odd}\})$ is a matching of $D_0 \cap 2^{K_0 \cup \{k\}}$. Since this matching is of size $1 + \frac{k}{2}$ and $|D_1 \cap 2^{K_0}| \leq \frac{k}{2}$, we again contradict (\ref{eq K0}). We conclude that $R \subseteq \cup \text{mc}_2(T)$, as claimed. Therefore, $R$ is a minimum transversal of $\text{mc}_2(T)$. Since $R$ is a transversal of ${\rm mc}(T)$ and ${\rm mc}_2(T) \subseteq {\rm mc}(T)$, it follows that $R$ is also a minimum transveral of  $\text{mc}(T)$, as required. Moreover, we have $\vert R\vert =\vert \text{mc}_2(T)\vert = \tau(\text{mc}(T))$. Since $\vert \text{mc}_2(T)\vert = \nu(\text{mc}(T))$, we obtain  $\tau(\text{mc}(T)) = \nu(\text{mc}(T))$, i.e.,  $\text{mc}(T)$ has the K\"{o}nig property.

Finally, to prove that  $\mathcal{ M}(T)$ is strictly bipartite by $R$, we have to prove that $M\cap R\neq \varnothing$, $\overline{M} \cap R \neq \varnothing$, $M\cap \overline{R}\neq \varnothing$ for every $M\in \mathcal{M}(T)$. Let $M\in \mathcal{M}(T)$. Since $M$ and $\overline{M}$ are co-modules of $T$, and $R$ is a transversal of ${\rm mc}(T)$, we have $M \cap R \neq \varnothing$ and $\overline{M} \cap R \neq \varnothing$.
  Suppose to the contrary that $M\subseteq R$. There exists a subset $I$ of $M$ such that $I \in \text{mc}(T)$. Since $I \subseteq R$ and $|I \cap R|=1$ because $R$ is an exact transversal of $\text{mc}(T)$, we have $\vert I\vert =1$. 
 By Assertion 3 of Proposition \ref{propo modules}, $M\cap \overline{I}$ is a module of $T$ because $\overline{I}$ and $M$ are. Suppose $\vert M\cap \overline{I}\vert \geq 2$. Since $M\cap \overline{I}$ is a nontrivial module of $T$, $M\cap \overline{I}$ contains an element $J$ of ${\rm mc}(T)$. By reasoning as above, we obtain $\vert J\vert =1$. It follows from Observation~\ref{obs MNN'} that $M=V(T)$, a contradiction. Thus $\vert M\cap \overline{I}\vert =1$. Since $\vert M \cap I\vert =1$, it follows that $M$ is a twin of $T$. Let $C$ be the element of $\textbf{C}(T)$ containing $M$. We may assume that $T[C] = \underline{n}$ for some integer $n \geq 2$.
   By interchanging $T$ and $T^{\star}$, we may assume that $I=\{0\}$ and $M=\{0,1\}$. Thus $r(C)=\{i \in C: i \text{ is even}\}$ (see (\ref{eq C'})). Since $R\cap C=r(C)$, it follows that $1 \in M \setminus R$, which contradicts $M\subseteq R$. This completes the proof.
 \end{proof}

Alternatively, Theorem~\ref{thhypergraphe} can be stated as in Theorem~\ref{not tr} below.

\begin{Notation} \label{not tr0} \normalfont
	Given a tournament $T$, we denote by $\text{tr}(T)$ the set of exact and minimum transversals $R$ of ${\rm mc}(T)$ such that $\mathcal{M}(T)$ is strictly bipartite by $R$. 
\end{Notation}	

\begin{thm} \label{not tr}
For any tournament $T$ that is not transitive of even order, we have $\varnothing \neq {\rm tr}(T) \subseteq \binom{\cup {\rm mc}(T)}{\Delta(T)}$.	
	\end{thm}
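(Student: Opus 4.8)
The plan is to derive Theorem~\ref{not tr} directly from Theorem~\ref{thhypergraphe} together with the elementary facts about transversals already recorded in Section~3; no genuinely new combinatorial work is needed, since this statement is essentially a repackaging of Theorem~\ref{thhypergraphe} in the language of Notation~\ref{not tr0}.

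For the non-emptiness ${\rm tr}(T) \neq \varnothing$, I would simply invoke Theorem~\ref{thhypergraphe}: it produces an exact and minimum transversal $R$ of ${\rm mc}(T)$ by which $\mathcal{M}(T)$ is strictly bipartite, and by Notation~\ref{not tr0} such an $R$ is exactly a member of ${\rm tr}(T)$. In the degenerate case where $T$ is indecomposable one has $\mathcal{M}(T) = {\rm mc}(T) = \varnothing$, and one checks that $\varnothing \in {\rm tr}(T)$ vacuously, so the conclusion still holds.

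For the inclusion, I would fix an arbitrary $R \in {\rm tr}(T)$ and verify the two membership conditions for $\binom{\cup {\rm mc}(T)}{\Delta(T)}$. First, $R$ is a minimum transversal of ${\rm mc}(T)$, and a minimum transversal of any family is contained in its union (as observed just before the definition of the transversal number in Section~3); hence $R \subseteq \cup {\rm mc}(T)$. Second, being a minimum transversal, $|R| = \tau({\rm mc}(T))$; applying the K\"{o}nig property $\tau({\rm mc}(T)) = \nu({\rm mc}(T))$ furnished by Theorem~\ref{thhypergraphe}, followed by the identity $\nu({\rm mc}(T)) = \Delta(T)$ from (\ref{eq nu = Delta}), yields $|R| = \Delta(T)$. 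Combining the two facts gives $R \in \binom{\cup {\rm mc}(T)}{\Delta(T)}$, as required.

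Since every ingredient is already in hand, I do not expect a real obstacle here; the only point deserving care is to confirm that the chain $|R| = \tau({\rm mc}(T)) = \nu({\rm mc}(T)) = \Delta(T)$ is legitimate, i.e.\ that the K\"{o}nig property is genuinely available for the whole family ${\rm mc}(T)$ and not merely for the particular maximum matching used inside the proof of Theorem~\ref{thhypergraphe}. But Theorem~\ref{thhypergraphe} asserts exactly this, so the chain is valid. I would also note, for completeness, that the reverse inclusion $\binom{\cup {\rm mc}(T)}{\Delta(T)} \subseteq {\rm tr}(T)$ is neither claimed nor expected to hold, so no effort should be spent trying to upgrade the inclusion to an equality.
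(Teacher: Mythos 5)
Your proposal is correct and matches the paper's own proof essentially line for line: non-emptiness from the first assertion of Theorem~\ref{thhypergraphe}, the containment $R \subseteq \cup\,{\rm mc}(T)$ from minimality of the transversal, and $|R| = \tau({\rm mc}(T)) = \nu({\rm mc}(T)) = \Delta(T)$ via the K\"{o}nig property and (\ref{eq nu = Delta}). Your extra remarks (the vacuous indecomposable case and the observation that the inclusion is not an equality) are sound but not needed.
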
	
\begin{proof}
	Let $T$ be a tournament that is not transitive of even order. The first assertion of Theorem~\ref{thhypergraphe} says that ${\rm tr}(T) \neq \varnothing$. Let $R \in {\rm tr}(T)$. By minimality of the transversal $R$ of ${\rm mc}(T)$, we have $R \subseteq \cup {\rm mc}(T)$. Since ${\rm mc}(T)$ has the K\"{o}nig property (see Theorem~\ref{thhypergraphe}) and $\Delta(T) = \nu({\rm mc}(T))$ (see (\ref{eq nu = Delta})), we obtain $|R| = \Delta(T)$. Thus $R \in \binom{\cup {\rm mc}(T)}{\Delta(T)}$, and hence ${\rm tr}(T) \subseteq \binom{\cup {\rm mc}(T)}{\Delta(T)}$.	
\end{proof} 
 The proof of Theorem~\ref{not tr} shows how Theorem~\ref{thhypergraphe} implies Theorem~\ref{not tr}. Conversely, it is easily seen that Theorem~\ref{not tr} immediately implies Theorem~\ref{thhypergraphe}. Thus, Theorems~\ref{thhypergraphe} and \ref{not tr} are equivalent.   
 \section{Proof of Theorem \ref{thm principal}}
 Since ${\rm tr}(T) \subseteq \binom{{\rm mc}(T)}{\Delta(T)}$ in Theorem~\ref{not tr}, then Theorem~\ref{thm principal} is a direct consequence of Theorem~\ref{thm principal2} below, which gives a finer localization of the subset $X$ such that ${\rm Inv}(T, X)$ is indecomposable. 
 \begin{thm} \label{thm principal2}
 	Given a tournament $T$ of order at least $5$ that is not transitive of even order, there exists $X \in {\rm tr}(T)$ such that ${\rm Inv}(T, X)$ is indecomposable. Moreover, we have $\delta'(T) = \Delta(T)$. 
 \end{thm}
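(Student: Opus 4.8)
The plan is to first isolate the elementary combinatorial rule that governs how a single subtournament-reversal affects a module, and then to read off both the lower and the upper bound for $\delta'(T)$ from it. Fix $X \subseteq V(T)$ and write $T' = {\rm Inv}(T,X)$; recall that $T'$ differs from $T$ exactly on the arcs inside $X$, so that an arc joining $x$ and $y$ is reversed precisely when $\{x,y\} \subseteq X$. The first step is to prove the following \emph{overlap criterion}: if $M$ is a nontrivial module of $T$, then $M$ is still a module of $T'$ if and only if $X$ does \emph{not} overlap $M$, i.e.\ if and only if $X \cap M = \varnothing$, or $M \subseteq X$, or $X \subseteq M$. This is a direct arc computation: for $v \in \overline{M}$ and $x,y \in M$ the arcs $vx$ and $vy$ agree in $T$, and they still agree in $T'$ unless $v \in X$ while exactly one of $x,y$ lies in $X$; the latter situation occurs for some admissible triple precisely when $X$ meets both $M$ and $\overline{M}$ and also leaves a vertex of $M$ outside $X$, which is exactly the condition that $X$ overlaps $M$. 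Since $T = {\rm Inv}(T',X)$, the same criterion applies symmetrically with the roles of $T$ and $T'$ exchanged, a fact I will use to control the modules newly created by the reversal.

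From the criterion the lower bound $\delta'(T) \geq \Delta(T)$ is immediate. Suppose $T' = {\rm Inv}(T,X)$ is indecomposable. Then every nontrivial module $M$ of $T$ must fail to be a module of $T'$, so by the criterion $X$ overlaps every $M \in \mathcal{M}(T)$. I then check that this forces $X$ to be a transversal of ${\rm mc}(T)$: for $I \in {\rm mc}(T)$ either $I$ or $\overline{I}$ is a nontrivial module of $T$, and in both cases $X$ overlapping that module yields $X \cap I \neq \varnothing$. Hence $|X| \geq \tau({\rm mc}(T))$, and since ${\rm mc}(T)$ has the K\"{o}nig property and $\nu({\rm mc}(T)) = \Delta(T)$ by Theorem~\ref{thhypergraphe} and (\ref{eq nu = Delta}), we obtain $|X| \geq \Delta(T)$.

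For the reverse inequality and the main existence assertion I take $X \in {\rm tr}(T)$, which is nonempty and satisfies $|X| = \Delta(T)$ by Theorem~\ref{not tr}. Because $\mathcal{M}(T)$ is strictly bipartite by $X$, the set $X$ overlaps every nontrivial module of $T$, so by the overlap criterion no nontrivial module of $T$ survives in $T'$. It remains to prove that the reversal creates no new module, and this is the heart of the argument. Suppose $N$ were a nontrivial module of $T'$. Applying the symmetric form of the criterion to $T = {\rm Inv}(T',X)$, and using that $X$ already overlaps every nontrivial module of $T$, one sees that $N$ cannot itself be a module of $T$; hence $X$ must overlap $N$ as well, so $N \cap X$ and $N \setminus X$ are both nonempty and $X \not\subseteq N$. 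Unwinding the module condition for $N$ in $T'$ then yields a rigid \emph{separation structure} in $T$: every vertex of $\overline{X} \setminus N$ sees all of $N$ uniformly, while every vertex of $X \setminus N$ beats all of $N \setminus X$ and loses to all of $N \cap X$, or conversely.

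The hard part is to turn this separation structure into a contradiction with the defining properties of $X \in {\rm tr}(T)$, and this is the principal obstacle, since it is the only place where the full strength of \emph{exactness} of $X$ (and not merely that $X$ overlaps $\mathcal{M}(T)$) is needed. My plan is to exploit exactness together with the explicit description of $X$ on transitive components coming from Theorem~\ref{thhypergraphe}: the restrictions $N \cap X$ and $N \setminus X$ are modules of $T[X]$ and of $T[\overline{X}]$ respectively (Assertion~1 of Proposition~\ref{propo modules}), and I would argue that the separation structure lets one assemble from $N$ either an honest nontrivial module of $T$ meeting some $I \in {\rm mc}(T)$ in two points, or a second vertex of $X$ inside a single minimal co-module, each of which contradicts the exactness $|X \cap I| = 1$ of the transversal $X$; the transitive-component case, where newly created twins are most likely to appear, is handled using Notation~\ref{notat C(k)} and the alternating choice (\ref{eq C'}) defining $r(C)$, exactly as in the final paragraph of the proof of Theorem~\ref{thhypergraphe}. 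Once these new modules are excluded, $T'$ has no nontrivial module, so ${\rm Inv}(T,X)$ is indecomposable with $|X| = \Delta(T)$, giving $\delta'(T) \leq \Delta(T)$; combined with the lower bound this yields $\delta'(T) = \Delta(T)$.
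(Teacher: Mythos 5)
Your overlap criterion is exactly Fact~\ref{rinv}, and your lower bound $\delta'(T) \geq \Delta(T)$ (transversality of $X$ plus the K\"{o}nig property of ${\rm mc}(T)$) matches the paper. The gap is in the other direction, and it is twofold. First, the claim your plan aims at --- that for \emph{every} $X \in {\rm tr}(T)$ the tournament ${\rm Inv}(T,X)$ is indecomposable --- is false when $\Delta(T) = 2$. The paper's Section~5 gives the counterexample $T_n$: there ${\rm mc}(T_n) = \{\{0,\ldots,n-2\},\{n-1\}\}$, one checks $\{1,n-1\} \in {\rm tr}(T_n)$, yet $\{0,n-1\}$ is a nontrivial module of ${\rm Inv}(T_n,\{1,n-1\})$. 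So no contradiction can be derived from the properties defining ${\rm tr}(T)$ alone; the paper proves the ``every $X \in {\rm tr}(T)$ works'' statement only under the hypothesis $\Delta(T) \geq 3$ (Theorem~\ref{pinv}), and that hypothesis is used substantively, e.g.\ to force $|M \cap \overline{I}| \geq 2$ in the ${\rm mc}^-(T)$ case of Claim~\ref{claim} and to get $|M| \geq 3$ for a minimal co-module $M$ of $T'$. The case $\Delta(T) = 2$ requires a separate ingredient your proposal lacks: the paper invokes Theorem~\ref{deltan} (since $\delta(T) = \lceil \Delta(T)/2 \rceil = 1$, a single arc-reversal, i.e.\ a reversal of a $2$-set $X$, makes $T$ indecomposable) and then verifies a posteriori, via Fact~\ref{rinv}, that this particular $X$ lies in ${\rm tr}(T)$. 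Without this case split your argument would, at the crucial step, be trying to prove a false statement.

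Second, even in the regime $\Delta(T) \geq 3$ where the statement is true, the heart of your argument is only announced (``I would argue that the separation structure lets one assemble \ldots''), and the tool you propose to lean on is not available: the alternating description of $X$ on transitive components via (\ref{eq C'}) pertains to the \emph{particular} transversal $R = R_0 \cup R_1$ constructed in the proof of Theorem~\ref{thhypergraphe}, whereas ${\rm tr}(T)$ is defined abstractly (Notation~\ref{not tr0}) and an arbitrary $X \in {\rm tr}(T)$ need not have that form. The paper instead argues intrinsically from exactness and strict bipartition: Lemma~\ref{linv} shows $\mathcal{M}(T) \cap \mathcal{M}(T') = \varnothing$ and that every $M \in \mathcal{M}(T')$ is a transversal of ${\rm mc}(T)$ overlapped by $R$; Claim~\ref{claim} upgrades this (using $\Delta(T) \geq 3$, via the dichotomy (\ref{q})--(\ref{qq}) and the split ${\rm mc}^+(T)/{\rm mc}^-(T)$) to the statement that every nontrivial module of $T'$ is an \emph{exact} transversal of ${\rm mc}(T)$; then a minimal co-module $M \in {\rm mc}^+(T')$ is shown to satisfy $|M \cap \overline{R}| = 1$ (Claim~\ref{claim2}) and finally to produce, inside some $I \in {\rm mc}(T)$, vertices violating the module condition. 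Your ``separation structure'' (which, note, only holds per vertex of $X \setminus N$, with direction possibly varying from vertex to vertex) is the correct starting point, but the chain of reductions that replaces your sketched assembly is precisely the nontrivial content of the paper's proof and is missing from yours.
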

The aim of this section is to prove Theorem~\ref{thm principal2}. We first introduce some convenient notations. Let $T$ be a tournament. We also denote by $T$ the function $T : (V(T)\times V(T)) \setminus \{(x,x) : x \in V(T)\} \longrightarrow \mathbb{Z}_2$, defined by  
 \begin{equation*}
 	T(x,y)= \
 	\begin{cases}
 		1 \ \ \ \ \text{if} \ \  (x,y) \in A(T),\\
 		
 		0 \ \ \ \   \text{if} \ \ (x,y) \notin A(T).
 	\end{cases}
 \end{equation*}
For instance, given a subset $R$ of $V(T)$, we have

\begin{equation} \label{1+}
({\rm Inv}(T, R))(x,y)= \
\begin{cases}
1 + T(x,y) \ \ \ \ \text{if }  \{x,y\} \subseteq R,\\

T(x,y) \ \ \ \ \ \ \ \   \text{ otherwise}.
\end{cases}
\end{equation}
For two disjoint subsets $X$ and $Y$ of $V(T)$, we write $X \equiv_T Y$ to mean that $T(x,y) = T (x',y')$ for every $x, x' \in X$ and $y, y' \in Y$. 
 For more precision, we write $T(X,Y) =1$ (resp. $T(X,Y) =0$) to mean that $T(x,y) =1$ (resp. $T(x,y) =0$) for every $x \in X$ and $y \in Y$. When $X = \{x\}$ for some $x \in V(T)$, we write $x \equiv_T Y$ for $\{x\} \equiv_T Y$, and $T(x,Y)$ for $T(\{x\}, Y)$. Notice that for $M \subseteq V(T)$, $M$ is a module of $T$ if and only if $x \equiv_T M$ for every $x \in \overline{M}$, or, equivalently, if and only if $T(x,u) = T(x,v)$ for every $x \in \overline{M}$ and $u,v \in M$.
 
 The following facts come from a simple examination of the adjacency relation in ${\rm Inv}(T,R)$ compared to that in $T$, as observed in (\ref{1+}).
 \begin{Fact}\label{rinv} 
  	Let $T$ be a tournament and $M, X \subseteq V(T)$.
  	\begin{enumerate}
  	 \item If $M$ and $X$ do not overlap, then $M$ is a module of ${\rm Inv}(T,X)$ if and only if it is a module of $T$.
  	 \item If $M$ and $X$ overlap, then  $M \notin \mathcal{M}(T) \cap \mathcal{M}({\rm Inv}(T,X))$.
  	 \item If ${\rm Inv}(T,X)$ is indecomposable, then $X$ is a transversal of ${\rm mc}(T)$.
  	 \item If ${\rm Inv}(T,X)$ is indecomposable, then $\mathcal{M}(T)$ is strictly bipartite by $X$.
  	 \end{enumerate}
  \end{Fact}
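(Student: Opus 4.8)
The plan is to derive all four assertions from the pointwise description (\ref{1+}) of the adjacency function of ${\rm Inv}(T,X)$, together with the row-constancy characterization of modules recalled just above: a set $M$ is a module of a tournament $U$ if and only if $U(x,u)=U(x,v)$ for all $x\in \overline{M}$ and all $u,v\in M$. The single underlying observation is that passing from $T$ to ${\rm Inv}(T,X)$ flips the value $T(x,u)$ precisely when both $x$ and $u$ lie in $X$. Thus, fixing $x\in\overline{M}$, the \emph{row} $(T(x,u))_{u\in M}$ is altered only at those $u\in M\cap X$, and only when $x\in X$.

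For Assertion~1, I would fix $x\in\overline{M}$ and track how the row $(T(x,u))_{u\in M}$ changes. If $x\notin X$, no entry changes; if $x\in X$, the entries that flip are exactly those with $u\in M\cap X$. The hypothesis that $M$ and $X$ do not overlap splits into three cases: $M\cap X=\varnothing$, $M\subseteq X$, or $X\subseteq M$. In the first case no entry flips; in the second every $u\in M$ lies in $X$, so when $x\in X$ the whole row flips uniformly; and the third case forces $x\notin X$ (since $x\in\overline{M}\subseteq\overline{X}$), so again no entry flips. In every case the row is either unchanged or flipped as a whole, hence constant over $u\in M$ in $T$ if and only if constant in ${\rm Inv}(T,X)$. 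Applying this for all $x\in\overline{M}$ yields that $M$ is a module of $T$ if and only if it is a module of ${\rm Inv}(T,X)$.

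For Assertion~2, I would argue by contradiction, producing a single witness detecting the discrepancy. Since $M$ and $X$ overlap, I may choose $x\in X\setminus M$, $u\in M\cap X$ and $v\in M\setminus X$; note $x\in\overline{M}$ and $u\neq v$. By (\ref{1+}), the pair $\{x,u\}\subseteq X$ is flipped while $\{x,v\}\not\subseteq X$ is not, so ${\rm Inv}(T,X)(x,u)=1+T(x,u)$ and ${\rm Inv}(T,X)(x,v)=T(x,v)$. If $M$ were a module of both tournaments, we would have $T(x,u)=T(x,v)$ and $1+T(x,u)=T(x,v)$ simultaneously, forcing $1=0$ in $\mathbb{Z}_2$ — impossible. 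Hence $M$ cannot be a module of both $T$ and ${\rm Inv}(T,X)$; in particular $M\notin\mathcal{M}(T)\cap\mathcal{M}({\rm Inv}(T,X))$.

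Finally, Assertions~3 and 4 follow from Assertion~1 by contraposition, using that triviality of a set depends only on its cardinality and on the common vertex set $V(T)=V({\rm Inv}(T,X))$. For Assertion~3, if $X$ missed some $M\in{\rm mc}(T)$, i.e. $M\cap X=\varnothing$, then whichever of $M$ or $\overline{M}$ is a nontrivial module of $T$ does not overlap $X$ (note $X\subseteq\overline{M}$ in the second case), so by Assertion~1 it remains a nontrivial module of ${\rm Inv}(T,X)$, contradicting indecomposability. For Assertion~4, recall that $\mathcal{M}(T)$ is strictly bipartite by $X$ exactly when $X$ overlaps every $M\in\mathcal{M}(T)$; if some nontrivial module $M$ of $T$ were not overlapped by $X$, then Assertion~1 would make $M$ a nontrivial module of ${\rm Inv}(T,X)$, again contradicting indecomposability. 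I expect no serious obstacle: the only care needed is the disciplined case split in Assertion~1 (in particular noticing that $X\subseteq M$ forces $x\notin X$) and the correct choice of the three witness vertices in Assertion~2.
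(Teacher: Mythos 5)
Your proof is correct and matches the paper's approach: the paper states Fact~\ref{rinv} without a written proof, asserting that it follows from a simple examination of the adjacency relation via (\ref{1+}), and your argument is precisely that examination carried out in detail (the row-flip case analysis for Assertion~1, the three-witness contradiction for Assertion~2, and contraposition via Assertion~1 for Assertions~3 and~4). No gaps; in particular you correctly handle the subtle case $X \subseteq M$ forcing $x \notin X$, and you note that triviality of a module is preserved since the vertex set is unchanged.
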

Fact~\ref{rinv} shows how Theorems~\ref{thhypergraphe} and \ref{thm principal} are closely related (see also Theorem~\ref{pinv} below). More details about their relationship are provided in Section~5. In the rest of this section, Theorem~\ref{thhypergraphe} will play a central role in the proof of Theorem~\ref{thm principal2}.

\begin{lem}\label{linv}
Let $T$ be a tournament that is not transitive of even order, let $R\in {\rm tr}(T)$, and set $T'={\rm Inv}(T,R)$. The following  assertions hold.
\begin{enumerate}
	\item We have $\mathcal{M}(T) \cap \mathcal{M}(T') = \varnothing$.
\item  $\mathcal{M}(T')$ is strictly bipartite by $R$.
\item Every nontrivial module of $T'$ is a transversal of ${\rm mc}(T)$.  
\end{enumerate} 
\end{lem}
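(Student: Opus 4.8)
The plan is to establish the three assertions in the order given, feeding each into the next, using Fact~\ref{rinv} and the adjacency formula (\ref{1+}) as the main tools. Throughout I would keep in mind that $R \in {\rm tr}(T)$ means $R$ is an exact minimum transversal of ${\rm mc}(T)$ (so $|N \cap R| = 1$ for every $N \in {\rm mc}(T)$) and that $\mathcal{M}(T)$ is strictly bipartite by $R$, i.e. $R$ overlaps every element of $\mathcal{M}(T)$; I would also record at the outset that reversing $R$ is an involution, since applying (\ref{1+}) twice yields ${\rm Inv}(T', R) = T$. For Assertion~1, take $M \in \mathcal{M}(T)$. As $\mathcal{M}(T)$ is strictly bipartite by $R$, the sets $M$ and $R$ overlap, so Fact~\ref{rinv}(2) with $X = R$ gives $M \notin \mathcal{M}(T) \cap \mathcal{M}(T')$, whence $M \notin \mathcal{M}(T')$. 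Since $M$ was arbitrary, $\mathcal{M}(T) \cap \mathcal{M}(T') = \varnothing$.

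For Assertion~2 I take $M \in \mathcal{M}(T')$ and must show $R$ overlaps $M$. If not, then $M$ and $R$ do not overlap, so Fact~\ref{rinv}(1) applied to $T'$ and $R$ (using $T = {\rm Inv}(T', R)$) shows that $M$ is a module of $T$; as $V(T) = V(T')$ this module is nontrivial, giving $M \in \mathcal{M}(T)$ and contradicting Assertion~1. Hence every $M \in \mathcal{M}(T')$ overlaps $R$, which is precisely strict bipartiteness of $\mathcal{M}(T')$ by $R$; in particular both $M \cap R$ and $M \setminus R$ are nonempty.

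Assertion~3 is where the real work lies. Fix $M \in \mathcal{M}(T')$ and $N \in {\rm mc}(T)$, and suppose for contradiction that $M \cap N = \varnothing$. Let $\{r\} = N \cap R$; then $r \notin M$, so $T'(r,\cdot)$ is constant on $M$, say equal to $c$. By (\ref{1+}) this constancy splits in $T$ into $T(r,x) = c$ for $x \in M \setminus R$ and $T(r,x) = 1 + c$ for $x \in M \cap R$, and by Assertion~2 both parts of $M$ are nonempty, so $r$ relates to them oppositely in $T$. Now use that $N$ is a co-module of $T$. If $\overline{N}$ is a nontrivial module of $T$, then since $r \notin \overline{N}$ and $M \subseteq \overline{N}$, the vertex $r$ relates constantly to $M$ in $T$, contradicting the two opposite values just found. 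If instead $N$ itself is a nontrivial module of $T$, then $|N| \geq 2$, so I may pick $u \in N \setminus \{r\} \subseteq N \setminus R$; the module property of $N$ (together with tournament antisymmetry) transports the adjacency as $T(u,x) = T(r,x)$ for all $x \in \overline{N} \supseteq M$, and since $u \notin R$ formula (\ref{1+}) gives $T'(u,\cdot) = T(u,\cdot)$, which is therefore non-constant on $M$ while $u \notin M$, contradicting that $M$ is a module of $T'$. Either way $M \cap N \neq \varnothing$, so $M$ is a transversal of ${\rm mc}(T)$.

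I expect Assertion~3, and within it the case where $N$ is the module rather than $\overline{N}$, to be the main obstacle: there one cannot read the adjacency off $r$ directly (as $r$ lies \emph{inside} $N$), and must transport it to an auxiliary vertex $u \in N \setminus R$ that escapes the reversal. The other delicate point, which makes the contradiction bite in both cases, is that $M$ genuinely straddles $R$, i.e. $M \cap R$ and $M \setminus R$ are both nonempty; this is exactly what Assertion~2 supplies, so the order in which the three parts are proved is essential.
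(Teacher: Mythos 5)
Your proof is correct and follows essentially the same route as the paper: Assertions~1 and~2 via strict bipartiteness of $\mathcal{M}(T)$ by $R$ together with Fact~\ref{rinv}, and Assertion~3 by contradiction, splitting on whether $N$ or $\overline{N}$ is the nontrivial module of $T$ and exhibiting a vertex of $N$ that fails to be $\equiv_{T'}$-constant on $M$. The only (harmless) cosmetic difference is in the module case of Assertion~3: you first extract $T(r,M\cap R)\neq T(r,M\setminus R)$ from $M\in\mathcal{M}(T')$ and then use exactness of $R$ to pick a single witness $u\in N\setminus R$, whereas the paper keeps both witnesses $u\in I\cap R$ and $v\in I\cap\overline{R}$ (the latter obtained from strict bipartiteness of $\mathcal{M}(T)$ rather than exactness) and splits into the two subcases $\{x,y\}\equiv_T I$ or not.
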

\begin{proof} We clearly can suppose that $T$ and $T'$ are decomposable.
	Since $\mathcal{M}(T)$ is strictly bipartite by $R$, the first assertion follows from Assertion~2 of Fact~\ref{rinv}.
 Let $M\in \mathcal{M}(T')$.  If $M$ is a module of $T$, then $R$ overlaps $M$ because $\mathcal{M}(T)$ is strictly bipartite by $R$. If $M$ is not a module of $T$, then $R$ overlaps $M$ by Assertion~1 of Fact~\ref{rinv}. Thus the second assertion holds.
 For the third assertion, let $I \in {\rm mc}(T)$. We have to prove that $M \cap I \neq \varnothing$. Since $R$ and $M$ overlap by the second assertion, and since $R$ is  a transversal of $\text{mc}(T)$, we have $M\cap R \neq \varnothing$, $M\cap \overline{R} \neq \varnothing$, and $I \cap R \neq \varnothing$. So let $x\in M\cap R$, $y\in M\cap \overline{R}$, and $u \in I \cap R$. Suppose to the contrary that $M\cap I=\varnothing$. 
 If $\overline{I}$ is a  module of $T$, then $u \equiv_T \{x,y\}$ and thus $u \not\equiv_{T'} \{x,y\}$, which contradicts that $M$ is a module of $T'$. Therefore $I \in \mathcal{M}(T)$. Since $\mathcal{M}(T)$ is bipartite by $R$, we have $I \cap \overline{R} \neq \varnothing$. So let $v \in I\cap \overline{R}$. Observe that $x$, $y$, $u$, and $v$ are pairwise distinct. Since $I$ is a module of $T$, then either $\{x,y\} \equiv_T I$ and thus $u \not\equiv_{T'} M$, or $T(x,I) = 1 + T(y,I)$ and thus $v \not\equiv_{T'} M$. In both cases, this contradicts $M \in \mathcal{M}(T')$.  
\end{proof}

\begin{Notation} \normalfont
	In the proof of Theorem~\ref{pinv} below, we need to divide ${\rm mc}(T)$ into the disjoint union of ${\rm mc}^+(T) = {\rm mc}(T) \cap \mathcal{M}(T)$ and ${\rm mc}^-(T) = {\rm mc}(T) \setminus \mathcal{M}(T)$.
	\end{Notation}
	 Observe that by Assertions~1 and 2 of Lemma~\ref{comod part}, we have $|{\rm mc}^-(T)| \leq 2$. Moreover, it follows from Assertion~3 of Lemma~\ref{comod part} that 
	\begin{equation} \label{eq mc+}	
	\text{if the tournament } T \text{ is decomposable and } v(T) \geq 4, \text{ then } {\rm mc}^+(T) \neq \varnothing.  
	\end{equation}

 \begin{thm}\label{pinv}
 Let $T$ be a tournament  that  is not transitive of even order.
 If $\Delta(T)\geq 3$, then for every $R\in {\rm tr}(T)$, the tournament ${\rm Inv}(T,R)$ is indecomposable.
 \end{thm}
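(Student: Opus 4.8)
The plan is to assume, for contradiction, that $T' := {\rm Inv}(T,R)$ is decomposable and to manufacture a nontrivial module of $T$ that is \emph{not} overlapped by $R$, contradicting that $\mathcal{M}(T)$ is strictly bipartite by $R$ (which holds since $R \in {\rm tr}(T)$, see Theorem~\ref{thhypergraphe}). First I would fix $M \in \mathcal{M}(T')$ and feed it into Lemma~\ref{linv}: this gives that $M$ is a transversal of ${\rm mc}(T)$ (so $|M| \ge \tau({\rm mc}(T)) = \nu({\rm mc}(T)) = \Delta(T) \ge 3$ by the K\"onig property and (\ref{eq nu = Delta})) and that $\mathcal{M}(T')$ is strictly bipartite by $R$, whence $R$ overlaps $M$. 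The entire argument then lives on the partition $V(T) = A \sqcup B \sqcup C \sqcup D$, where $A = M \cap R$, $B = M \cap \overline{R}$, $C = \overline{M} \cap R$ and $D = \overline{M} \cap \overline{R}$; since $R$ overlaps $M$, the sets $A$, $B$, $C$ are all nonempty.

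Next I would translate ``$M$ is a module of $T'$'' into adjacency information about $T$ via (\ref{1+}). Because reversal alters only pairs inside $R$, every $v \in D$ is already uniform towards $M$ in $T$, while every $v \in C$ satisfies $T(v, A) = 1 + T(v, B)$ and is uniform on $A$ and on $B$ separately; thus every vertex of the nonempty set $C$ \emph{separates} $A$ from $B$ in $T$. In the same spirit, applying Assertion~1 of Proposition~\ref{propo modules} inside $T'[\overline{R}] = T[\overline{R}]$ and inside $T'[R] = (T[R])^{\star}$ shows that $B$ is a module of $T[\overline{R}]$ and $A$ is a module of $T[R]$.

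The key claim I would establish is that every $I \in {\rm mc}^+(T) = {\rm mc}(T) \cap \mathcal{M}(T)$ meets $\overline{M}$. Indeed, if such an $I$ satisfied $I \subseteq M$, then $I$ would be a nontrivial module of $T$ with $|I \cap R| = 1$ (exactness of $R$) and $I \setminus R \ne \varnothing$ (because $R$ overlaps $I$), so $I$ would have a vertex in $A = M \cap R$ and a vertex in $B = M \setminus R$; any vertex of $C$ separates those two vertices, contradicting that $I$ is a module of $T$. Combined with the fact that $M$ meets \emph{every} element of ${\rm mc}(T)$, this controls how $M$ cuts the members of a maximum matching of ${\rm mc}(T)$: fixing such a matching of size $\Delta(T) \ge 3$, at least $\Delta(T) - 2 \ge 1$ of its members lie in ${\rm mc}^+(T)$ (since $|{\rm mc}^-(T)| \le 2$, and ${\rm mc}^+(T) \ne \varnothing$ by (\ref{eq mc+})), and each such member is split by $M$. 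Tracking which vertex of each member the exact transversal $R$ selects, I would then locate a nontrivial module of $T$ that sits improperly with respect to $R$ (contained in $\overline{R}$, contained in $R$, or containing $R$), giving the contradiction.

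The main obstacle will be that $A$ and $B$ need not themselves be modules of $T$, so one cannot naively promote the restricted modules $A \subseteq R$ and $B \subseteq \overline{R}$: if $M$ is chosen minimal in $\mathcal{M}(T')$ then $T'[M] = {\rm Inv}(T[M], A)$ is indecomposable, and this forces both $A$ and $B$ to fail modularity in $T$ as soon as $|A| \ge 2$ and $|B| \ge 2$. Consequently the contradiction must be distilled from the \emph{global} interaction between the matching, the exactness of $R$, and the separation property of $C$, while carefully handling the degenerate configurations $|A| = 1$, $|B| = 1$ and $D = \varnothing$. This is precisely the step in which the hypothesis $\Delta(T) \ge 3$, together with the bound $|{\rm mc}^-(T)| \le 2$, becomes indispensable: it is what guarantees enough genuine-module members in the matching to run the surgery and rule out the small exceptional cases.
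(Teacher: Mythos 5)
Your setup is sound and in fact parallels the opening of the paper's own argument: the consequences you draw from Lemma~\ref{linv} (that a nontrivial module $M$ of $T'={\rm Inv}(T,R)$ is a transversal of ${\rm mc}(T)$, hence $|M|\ge\Delta(T)\ge 3$ via the K\"onig property, and that $R$ overlaps $M$), the four-block partition, the observation that every vertex of $C=\overline{M}\cap R$ separates $A=M\cap R$ from $B=M\cap\overline{R}$ in $T$, and your key claim that no $I\in{\rm mc}^+(T)$ is contained in $M$ are all correct; the separation trick is exactly the mechanism behind the paper's proof of (\ref{q}). But your proof stops where the real difficulty begins. The terminal step is only promised (``I would then locate a nontrivial module of $T$ that sits improperly with respect to $R$''), and your closing paragraph concedes that the contradiction ``must be distilled'' from the interaction of the matching, exactness, and separation. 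In the paper, this distillation consists of two precise statements your sketch never reaches: Claim~\ref{claim}, that \emph{every} $M\in\mathcal{M}(T')$ is an \emph{exact} transversal of ${\rm mc}(T)$, and Claim~\ref{claim2}, that a minimal co-module $M\in{\rm mc}^+(T')$ satisfies $|M\cap\overline{R}|=1$; and even granted both, a further argument is needed to close the proof. Your key claim ($I\not\subseteq M$ for $I\in{\rm mc}^+(T)$) is strictly weaker than exactness: knowing that $M$ \emph{splits} each matched member of ${\rm mc}^+(T)$ yields no contradiction by itself, and upgrading it requires the dichotomy (\ref{q})--(\ref{qq}) (if $|M\cap I|\ge 2$ for $I\in\mathcal{M}(T)$, then $M\cap I$ overlaps $R$ \emph{and} $M\cup I=V(T)$), which is absent from your outline.

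The most concrete failure point is ${\rm mc}^-(T)$. A minimal co-module $I$ with $|I|\ge 2$ that is \emph{not} a module of $T$ (only $\overline{I}$ is) can a priori be contained in $M$, and your separation argument is powerless there: a vertex of $C$ distinguishing two vertices of $I$ contradicts nothing, precisely because $I$ is not a module. Ruling out this configuration is the longest part of the paper's proof of Claim~\ref{claim}; it uses $o_T(I)=0$ (Lemma~\ref{degre G_T}), Assertion~2 of Lemma~\ref{comod part} to find $J\in{\rm mc}^+(T)$ through a point of $M\cap\overline{I}\cap R$, the already-settled ${\rm mc}^+$ case to get $M\cap J=R\cap J=\{x\}$, and a vertex $y\in J\cap\overline{R}$ to force $T(\overline{I},I)=1$, contradicting non-modularity of $I$. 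Note also that this is where $\Delta(T)\ge 3$ genuinely works (it gives $|M\cap\overline{I}|\ge 2$ in that case, and later $|M|\ge 3$ hence $|M\cap R|\ge 2$ after Claim~\ref{claim2}), whereas your sketch invokes it only to guarantee one split member of the matching. So the proposal is a correct opening aligned with the paper's route, but the core of the proof is missing.
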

 \begin{proof}
 Suppose $\Delta(T)\geq 3$. Let $R\in {\rm tr}(T)$ and set $T'={\rm Inv}(T,R)$. We first prove the following claim, which is a  strengthening of Assertion~3 of Lemma~\ref{linv} under the hypothesis $\Delta(T)\geq 3$.
 \begin{claim} \label{claim}
 Every nontrivial module of $T'$ is an exact transversal of ${\rm mc}(T)$. 
 \end{claim}	
 \begin{proof}[Proof of Claim~\ref{claim}]
 	Let $M\in \mathcal{M}(T')$. To prove that  $M$ is an exact transversal of $\text{mc}(T)$,
 	 we first show that
 	\begin{numcases}{\text{for every } I\in \mathcal{M}(T), \text{ if } \vert M\cap I\vert \geq 2, \text{ then }}
 M \cap I \text{ and } R \text{ overlap}, \label{q}\\ 
\text{and } M \cup I = V(T). \label{qq} 
 	\end{numcases}
 Let $I\in \mathcal{M}(T)$ such that $\vert M\cap I\vert \geq 2$. 
 	By Assertion~1 of Proposition~\ref{propo modules}, $M \cap I$ is a module of $T[M]$. Suppose to the contrary that (\ref{q}) does not hold. In this instance, $M \cap I$ and $M \cap R$ do not overlap. Therefore, since $T'[M] = {\rm Inv}(T[M], M \cap R)$, it follows from Assertion~1 of Fact~\ref{rinv} applied to the tournament $T[M]$ that $M \cap I$ is also a module of $T'[M]$. Thus,
 $M \cap I \in \mathcal{M}(T')$ by Assertion~2 of Proposition~\ref{propo modules}. Since (\ref{q}) does not hold, this contradicts Assertion~2 of Lemma~\ref{linv}. Thus (\ref{q}) holds. Hence, if $R \cap \overline{M}\cap \overline{I} \neq \varnothing$, then for $x \in R \cap \overline{M}\cap \overline{I}$, since $|M \cap I| \geq 2$ and $x \equiv_T M \cap I$ because $I \in \mathcal{M}(T)$, we have $x \not\equiv_{T'} M \cap I$, which contradicts $M\in \mathcal{M}(T')$. Thus $R\subseteq M \cup I$. Since $M \in \mathcal{M}(T')$, $I \in \mathcal{M}(T)$, $M \cap I \neq \varnothing$, and $R\subseteq M \cup I$, we obtain that $M \cup I$ is a module of both $T$ and $T'$. It follows from Assertion~1 of Lemma~\ref{linv} that the module $M \cup I$ is trivial. Since $|M \cap I| \geq 2$, (\ref{qq}) holds. 
 	
 	Now let $I \in {\rm mc}(T)$. To prove that $|M \cap I| = 1$, we distinguish the following two cases.
 	\begin{itemize}
 	\item  Suppose $I \in {\rm mc}^+(T)$. Suppose to the contrary that $|M \cap I| \geq 2$. Since $\overline{M} \cap R \neq \varnothing$ by Assertion~2 of Lemma~\ref{linv}, it follows from (\ref{qq}) that $\overline{M} \cap R \cap I \neq \varnothing$. Therefore, since $|I \cap R| =1$ because $R$ is an exact transversal of ${\rm mc}(T)$, we obtain $M \cap I \cap R = \varnothing$, which contradicts (\ref{q}). Thus $|M \cap I| \leq 1$, and hence $|M \cap I| = 1$ by Assertion~3 of Lemma~\ref{linv}. 
 	
 	\item Suppose $I \in {\rm mc}^-(T)$. By Assertion~3 of Lemma~\ref{linv}, it suffices to prove that $|I| =1$. Suppose $|I| \neq 1$. Since $I \in {\rm mc}^{-}(T)$, we have $\overline{I}\in \mathcal{M}(T)$ and 
 	\begin{equation} \label{eqI}
 	I \text{ is not a module of } T.
 	\end{equation}
 	Moreover, since $\Delta(T) = \nu({\rm mc}(T))\geq 3$ by hypothesis, and $o_{T}(I)=0$ by Lemma~ \ref{degre G_T}, then $\vert M \cap \overline{I} \vert\geq 2$ by Assertion~3 of Lemma~\ref{linv}. It follows from (\ref{qq}) that $M \cup \overline{I} = V(T)$, i.e. $I \subseteq M$.
Moreover, $M \cap \overline{I}$ and $R$ overlap by (\ref{q}). Let $x \in M \cap \overline{I} \cap R$. Since $R\subseteq \cup \text{mc}(T)$ (see Theorem~\ref{not tr}), then $x \in J$ for some $J \in {\rm mc}(T)$. More precisely, $J \in {\rm mc}(T) \setminus \{I\}$ because $x \in J \cap \overline{I}$, and hence $I \cap J = \varnothing$ because $o_{T}(I)=0$. In addition, since $I$ is not a module of $T$ (see (\ref{eqI})) and $I \cap J = \varnothing$, then $J \in {\rm mc}^+(T)$ by Assertion~2 of Lemma~\ref{comod part}. It follows from the first case that $M \cap J = \{x\}$. We also have $R \cap J=\{x\}$ because $R$ is an exact transversal of $\text{mc}(T)$. Moreover, $J$ and $R$ overlap because $J \in \mathcal{M}(T)$ and $\mathcal{M}(T)$ is strictly bipartite by $R$. So consider a vertex $y\in J \cap \overline{R}$. Since $M \cap J = R \cap J =\{x\}$, we have $y \in \overline{M} \cap \overline{R}$. Moreover, since $M$ is a module of $T'$, by interchanging $T$ and $T^{\star}$, we may assume that $T'(y,M)=1$ and thus $T(y,M) =1$. In particular,  $T(y,I) =1$ because $I \subseteq M$. Since $\overline{I}$ is a module of $T$, it follows that $T(\overline{I}, I) = 1$, which contradicts (\ref{eqI}). \qedhere
\end{itemize}
\end{proof}
Suppose to the contrary that $T'$ is decomposable. By (\ref{eq mc+}), we have $\text{mc}^{+}(T') \neq \varnothing$. Let $M \in \text{mc}^{+}(T')$. 
\begin{claim} \label{claim2}
We have $\vert M\cap \overline{R}\vert =1$.	
\end{claim}	
\begin{proof}[Proof of Claim~\ref{claim2}]
 Suppose $\vert M\cap \overline{R}\vert \neq 1$. Since $M$ and $R$ overlap by Assertion~2 of Lemma~\ref{linv}, we have $2 \leq \vert M\cap \overline{R}\vert \leq v(T)-2$. It follows from the minimality of $M$ as a co-module of $T'$ that $M \cap \overline{R}$ is not a module of $T'$. Since $M$ is a module of $T'$ and $M \cap \overline{R}$ is not, there exist $x \neq y \in M \cap \overline{R}$ and $u \in M \cap R$ such that $T'(u,x) \neq T'(u,y)$, and thus 
 \begin{equation} \label{qu}
 T(u,x) \neq T(u,y).
 \end{equation}
 Since $R \subseteq \cup {\rm mc}(T)$(see Theorem~\ref{not tr}), we have $u\in I$ for some $I\in \text{mc}(T)$. Since $M$ is an exact transversal of ${\rm mc}(T)$ by Claim~\ref{claim}, we have $I\cap M=\{u\}$. If $I = \{u\}$, then $\overline{\{u\}}$ is a module of $T$, which contradicts (\ref{qu}). Thus $I \neq \{u\}$. Since $I\cap M=\{u\}$ and $I \neq \{u\}$, we have $I \cap \overline{M} \neq \varnothing$. Let $v \in I \cap \overline{M}$. 
 Since $M$ is a module of $T$, we have $T'(v,x) = T'(v,y)$, and thus 
 \begin{equation} \label{qv}
 T(v,x) = T(v,y).
 \end{equation} 
 Since $\{u,v\} \subseteq I$ and $\{x,y\} \subseteq \overline{I}$, it follows from (\ref{qu}) and (\ref{qv}) that $I$ is not a module of $T$. Therefore, $\overline{I}$ is a module of $T$, contradicting (\ref{qu}). Thus $|M\cap \overline{R}| =1$.     
 \end{proof}  
 Now since $\Delta(T) = \nu({\rm mc}(T)) \geq 3$ and $M$ is a transversal of ${\rm mc}(T)$ by Claim~\ref{claim}, we have $|M| \geq 3$. Since $|M\cap \overline{R}| =1$ by Claim~\ref{claim2}, we obtain $|M\cap R| \geq 2$. Let $u$ be the element 
 of $M\cap \overline{R}$. Recall that $M$ and $R$ overlap by Assertion~2 of Lemma~\ref{linv}. So let $v \in R \cap \overline{M}$. Since $R \subseteq \cup {\rm mc}(T)$ (see Theorem~\ref{not tr}), $v \in I$ for some $I \in {\rm mc}(T)$. Since $M$ is an exact transversal of ${\rm mc}(T)$ by Claim~\ref{claim}, we have $|I \cap M|=1$. More precisely, $I \cap M = \{u\}$ because $R$ is also an exact transversal of ${\rm mc}(T)$. 
 If $M \setminus I$ is a module of $T'[M]$, then $M \setminus I$ is a nontrivial module of $T'$ (see Assertion~2 of Proposition~\ref{propo modules}), which contradicts the minimality of $M$ as a co-module of $T'$. Thus, $M\setminus I$ is not a module of $T'$. Therefore, there exist $x \neq y \in M \cap R$ such that $T'(u,x) \neq T'(u,y)$ and thus $T(u,x) \neq T(u,y)$. In particular, $\overline{I}$ is not a module of $T$. Since $I \in {\rm mc}(T)$, $I$ is a module of $T$. Moreover, since $T(u,x) \neq T(u,y)$, we obtain $T(v,x) \neq T(v,y)$ and hence $T'(v,x) \neq T'(v,y)$, which contradicts that $M$ is a module of $T'$.
        \end{proof}
     We are now ready to prove Theorem~\ref{thm principal2}.
    \begin{proof}[Proof of Theorem~\ref{thm principal2}]
		Let $T$ be a tournament of order at least $5$ that is not transitive of even order. The theorem obviously holds when $T$ is indecomposable. Hence, suppose that $T$ is decomposable, i.e. $\Delta(T) \geq 2$. By Theorem~\ref{not tr}, ${\rm tr}(T) \neq \varnothing$.  If $\Delta(T) \geq 3$, then for every $X \in {\rm tr}(T)$, ${\rm Inv}(T,X)$ is indecomposable by Theorem~\ref{pinv}. Hence suppose $\Delta(T) =2$. By Theorem~\ref{deltan}, there exists $X \in \binom{V(T)}{2}$ such that ${\rm Inv}(T,X)$ is indecomposable. By Assertions~3 and 4 of Fact~\ref{rinv}, $X$ is a transversal of ${\rm mc}(T)$ and $\mathcal{M}(T)$ is strictly bipartite by $X$. Moreover, since $|X| = \Delta(T)= \nu({\rm mc}(T))$, the transversal $X$ of ${\rm mc}(T)$ is minimum and exact. Equivalently $X \in {\rm tr}(T)$. Hence, there always exists $X \in {\rm tr}(T)$ such that ${\rm Inv}(T, X)$ is indecomposable. Moreover, since $X$ is a minimum transversal of ${\rm mc}(T)$ and $|X| = \Delta(T)$, it follows from Assertion~3 of Fact~\ref{rinv} that $\delta'(T) = \Delta(T)$. This completes the proof.  
 \end{proof} 
 \section{Some additional remarks}
 Suppose that the tournament $T$ in Theorems~\ref{thm principal} and \ref{thhypergraphe} is transitive of even order with at least four vertices. In this instance, it is easy to see that $\mathcal{M}(T)$ is no longer strictly bipartite. It follows that there does not exist a subset $X$ of $V(T)$ such that ${\rm Inv}(T,X)$ is indecomposable (see Assertion~4 of Fact~\ref{rinv}). Thus, Theorems~\ref{thm principal} and \ref{thhypergraphe} fail to hold for these tournaments. However, it is easy to verify that ${\rm mc}(T)$ still has the K\"{o}nig property.
  \begin{cor} 
 	For any tournament $T$, ${\rm mc}(T)$ has the K\"{o}nig property.
 \end{cor}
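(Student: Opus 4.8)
The plan is to reduce the corollary to the single case left open by Theorem~\ref{thhypergraphe}, namely when $T$ is transitive of even order, and to settle that case by an explicit computation of ${\rm mc}(T)$. If $T$ is not transitive of even order, then Theorem~\ref{thhypergraphe} already asserts that ${\rm mc}(T)$ has the K\"{o}nig property, so nothing remains to be proved. Hence I may assume $T$ is transitive of even order, say $T = \underline{n}$ with $n$ even. If $n = 2$, then $T$ is indecomposable, ${\rm mc}(T) = \varnothing$, and $\nu({\rm mc}(T)) = \tau({\rm mc}(T)) = 0$, so the property holds trivially; thus I may assume $n \geq 4$.

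For $n \geq 4$, the whole vertex set $V(T)$ is the unique transitive component $C$ of $T$, with $|C| = n$, and I would describe ${\rm mc}(T)$ explicitly using Notation~\ref{notat C(k)} and Lemma~\ref{lem C mnc}. Since $V(T) \setminus \{0\}$ and $V(T) \setminus \{n-1\}$ are the only modules of $\underline{n}$ obtained by deleting a single vertex, the singletons $\{0\}$ and $\{n-1\}$ are the two extreme minimal co-modules, so that $C(0) = \{0\}$, $C(n-2) = \{n-1\}$, and $C(k) = \{k, k+1\}$ for $1 \leq k \leq n-3$. Therefore
\[
{\rm mc}(\underline{n}) = \{\{0\}, \{n-1\}\} \cup \{\{k, k+1\} : 1 \leq k \leq n-3\}.
\]

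It then remains to exhibit a matching and a transversal of ${\rm mc}(T)$ of the same size $\tfrac{n}{2}+1$. For the matching, I would take $\{0\}$ and $\{n-1\}$ together with the $\tfrac{n-2}{2}$ twins $\{1,2\}, \{3,4\}, \ldots, \{n-3, n-2\}$; these $\tfrac{n}{2}+1$ sets are pairwise disjoint, so $\nu({\rm mc}(T)) \geq \tfrac{n}{2}+1$. For the transversal, I would take $R = \{0, n-1\} \cup \{2, 4, \ldots, n-2\}$, which meets $\{0\}$, meets $\{n-1\}$, and meets every twin $\{k,k+1\}$ (the even one of $k, k+1$ lies in $\{2,\ldots,n-2\}$), and has size $\tfrac{n}{2}+1$, so $\tau({\rm mc}(T)) \leq \tfrac{n}{2}+1$. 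Combining these two bounds with the general inequality (\ref{eq nu tau}) yields $\tfrac{n}{2}+1 \leq \nu({\rm mc}(T)) \leq \tau({\rm mc}(T)) \leq \tfrac{n}{2}+1$, whence equality and the K\"{o}nig property.

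I expect the only mild obstacle to be the explicit identification of ${\rm mc}(\underline{n})$ — in particular verifying that exactly $\{0\}$ and $\{n-1\}$, and no other singletons, are co-modules — after which displaying a matching and a transversal of equal size is a routine parity count. This is precisely the count underlying the value $\Delta(n) = \left\lceil \frac{n+1}{2} \right\rceil = \tfrac{n}{2}+1$ recorded for even $n$ in Theorem~\ref{deltan}, so one could alternatively invoke (\ref{eq nu = Delta}) together with the transversal $R$ above to conclude $\tau({\rm mc}(T)) = |R| = \Delta(T) = \nu({\rm mc}(T))$.
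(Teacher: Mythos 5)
Your proposal is correct and follows essentially the same route as the paper: the paper derives the corollary from Theorem~\ref{thhypergraphe} and simply asserts that the remaining case of transitive tournaments of even order is ``easy to verify,'' which is exactly the case you settle by the explicit (and accurate) computation ${\rm mc}(\underline{n}) = \{\{0\},\{n-1\}\} \cup \{\{k,k+1\} : 1 \leq k \leq n-3\}$ together with a matching and a transversal of common size $\frac{n}{2}+1$. Your appeal to (\ref{eq nu = Delta}) and the value $\Delta(n)=\frac{n}{2}+1$ is a valid shortcut for the matching bound, so nothing is missing.
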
	
 On the other hand, Theorem~\ref{pinv} does not hold for the class $\mathcal{T}$ of the tournaments $T$ such $\Delta(T) = 2$. To see this, let us consider the tournament $T_n$ defined on $V(T_n) = \{0, \ldots, n-1\}$, where $n \geq 6$, in the following manner : 
 \begin{itemize}
 	\item $T_n-(n-1) = {\rm Inv}(\underline{n-1}, \{(i,i+1) : 0 \leq i \leq n-3\})$,
 	\item $T_n(n-1, \{0, \ldots, n-2\}) = 1$.
 \end{itemize}
It is easy to verify that $T_n-(n-1)$ is indecomposable, and that $\{0, \ldots, n-2\}$ is the unique nontrivial module of $T_n$. Therefore ${\rm mc}(T_n) = \{\{0, \ldots, n-2\}, \{n-1\}\}$, and hence $\Delta(T_n) =2$. It follows that $\{1, n-1\} \in {\rm tr}(T_n)$. But the tournament ${\rm Inv}(T_n, \{1,n-1\})$ is decomposable because $\{0,n-1\}$ is a nontrivial module of it. Therefore,
$T_n \in \mathcal{T} \cap \mathcal{R}$, where $\mathcal{R}$ is the class of the tournaments $T$ for which there exists $R \in {\rm tr}(T)$ such that ${\rm Inv}(T,R)$ is decomposable. Thus $\varnothing \neq \mathcal{R} \subseteq \mathcal{T}$. However, there exist tournaments $T$ of arbitrary large order, such that $\Delta(T)=2$ and for every $R \in {\rm tr}(T)$, ${\rm Inv}(T,R)$ is indecomposable (see e.g., \cite[Discussion]{Index2}). Thus $\mathcal{R} \varsubsetneq \mathcal{T}$. This discussion leads to the problem of characterization of the tournaments of the class $\mathcal{R}$.	   

We now explain how Theorem~\ref{thm principal} may be seen as a stronger version of Theorem~\ref{thhypergraphe}.

\begin{lem} \label{lemrem}
	Let $T$ be a tournament and $R$ a subset of $V(T)$. If $|R| = \Delta(T)$ and ${\rm Inv}(T,R)$ is indecomposable, then $R \in {\rm tr}(T)$.
	\end{lem}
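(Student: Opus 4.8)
The plan is to check that $R$ meets all three requirements in the definition of ${\rm tr}(T)$ (Notation~\ref{not tr0}): that $R$ is a minimum and exact transversal of ${\rm mc}(T)$, and that $\mathcal{M}(T)$ is strictly bipartite by $R$. Writing $T' = {\rm Inv}(T,R)$, two of these come for free from the hypothesis that $T'$ is indecomposable, via Assertions~3 and 4 of Fact~\ref{rinv}: the former gives that $R$ is a transversal of ${\rm mc}(T)$, and the latter gives that $\mathcal{M}(T)$ is strictly bipartite by $R$. I would first dispose of the degenerate case in which $T$ itself is indecomposable: then ${\rm mc}(T) = \mathcal{M}(T) = \varnothing$ and $\Delta(T) = 0$, so $R = \varnothing$, which lies in ${\rm tr}(T)$ vacuously. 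Henceforth I assume $T$ is decomposable, so that $v(T) \geq 3$.

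For minimality I would argue by cardinality. Being a transversal, $R$ satisfies $|R| \geq \tau({\rm mc}(T))$; conversely $|R| = \Delta(T) = \nu({\rm mc}(T))$ by (\ref{eq nu = Delta}), and $\nu({\rm mc}(T)) = \tau({\rm mc}(T))$ by the K\"{o}nig property of ${\rm mc}(T)$ (Theorem~\ref{thhypergraphe}, which applies because the strict bipartiteness of $\mathcal{M}(T)$ already rules out $T$ being transitive of even order). Hence $|R| = \tau({\rm mc}(T))$ and $R$ is a minimum transversal.

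The crux is exactness, and this is the step I expect to be the main obstacle. I would fix a maximum matching $\mathcal{N}$ of ${\rm mc}(T)$, so that its members are pairwise disjoint and $|\mathcal{N}| = \nu({\rm mc}(T)) = |R|$. Since $R$ meets each of these $|\mathcal{N}|$ disjoint sets and $|R| = |\mathcal{N}|$, a counting argument forces $R \subseteq \cup \mathcal{N}$ with exactly one vertex of $R$ in each member of $\mathcal{N}$. Now suppose, for contradiction, that $|F \cap R| \geq 2$ for some $F \in {\rm mc}(T)$, and pick $r \neq r'$ in $F \cap R$; they lie in distinct members $N, N'$ of $\mathcal{N}$. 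As $F$ meets the disjoint sets $N$ and $N'$, it coincides with neither, so by (\ref{eq overl disj}) it overlaps both, giving $o_T(F) \geq 2$. Lemma~\ref{degre G_T} then pins $F$ down: $o_T(F) = 2$ and $F \in {\rm tw}(T)$, whence $|F| = 2$ and $F = \{r,r'\} \subseteq R$. But then $F$ is a nontrivial module of $T$ that does not overlap $R$, so Assertion~1 of Fact~\ref{rinv} makes $F$ a module of $T'$ as well; since $2 = |F| < v(T)$, this is a nontrivial module of $T'$, contradicting the indecomposability of $T'$. Therefore $|F \cap R| = 1$ for every $F \in {\rm mc}(T)$, so $R$ is exact.

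Assembling the four properties---transversal, minimum, exact, and strict bipartiteness of $\mathcal{M}(T)$ by $R$---yields $R \in {\rm tr}(T)$. The routine ingredients (transversality, bipartiteness, and the cardinality identities) are handed to us directly by Fact~\ref{rinv} and the K\"{o}nig property; the real work is the exactness argument, where the overlap dichotomy (\ref{eq overl disj}) and the degree bound $o_T(F) \leq 2$ of Lemma~\ref{degre G_T} conspire to force any doubly-hit minimal co-module to be a twin lying inside $R$, which indecomposability then forbids.
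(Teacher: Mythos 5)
Your proof is correct and follows essentially the same route as the paper's: Assertions~3 and 4 of Fact~\ref{rinv} give transversality and strict bipartiteness, a cardinality comparison gives minimality, and exactness is forced by showing that a minimal co-module meeting $R$ twice would (via Lemma~\ref{degre G_T}) be a twin contained in $R$, hence a nontrivial module of ${\rm Inv}(T,R)$, contradicting indecomposability. The only local differences are cosmetic: the paper derives $o_T(I) \neq 0$ directly from the minimality of the transversal $R$ where you count against a maximum matching, and your detour through the K\"{o}nig property (and hence through Theorem~\ref{thhypergraphe}'s hypothesis) is unnecessary, since the chain $|R| \geq \tau({\rm mc}(T)) \geq \nu({\rm mc}(T)) = \Delta(T) = |R|$ already closes by (\ref{eq nu tau}) and (\ref{eq nu = Delta}).
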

\begin{proof}
	Suppose that $|R| = \Delta(T)$ and ${\rm Inv}(T,R)$ is indecomposable.
	By Assertions~3 and 4 of Fact~\ref{rinv}, $R$ is a transversal of ${\rm mc}(T)$ and $\mathcal{M}(T)$ is strictly bipartite by $R$. Since $|R| = \Delta(T)$, the transversal $R$ of ${\rm mc}(T)$ is minimum. Suppose to the contrary that the minimum transversal $R$ of ${\rm mc}(T)$ is not exact. In this instance, there exists $I \in {\rm mc}(T)$ such that $|I \cap R| \geq 2$. By minimality of $R$, we have $o_T(I) \neq 0$. It follows from Lemma~\ref{degre G_T} that $I$ is a twin of $T$, and in particular $I = I \cap R$. It follows that $I$ is also a twin of ${\rm Inv}(T,R)$, which contradicts that ${\rm Inv}(T,R)$ is indecomposable. We conclude that $R \in {\rm tr}(T)$.
\end{proof}
By Lemma~\ref{lemrem}, Theorem~\ref{thm principal} implies Theorem~\ref{thhypergraphe} for tournaments with at least five vertices. Moreover, by Lemma~\ref{lemrem} and Theorem~\ref{pinv}, Theorems~\ref{thm principal} and \ref{thhypergraphe} are equivalent for the tournaments $T$ such that $\Delta(T) \neq 2$. 

{}

\end{document}